\def\R{\mathbb{R}}
\def\e{{\varepsilon}}
\def\rad{\mathrm{rad}}
\def\Rad{\mathrm{Rad}}
\def\nm#1{\left\Vert#1\right\Vert}
\newtheorem{thm}{Theorem}
\newtheorem{lm}{Lemma}
\newtheorem{co}{Corollary}
\newtheorem{prop}{Proposition}
\newtheorem{conj}{Open Problem}
\newenvironment{proof}[1][Proof]{\noindent\textsl{#1:} }{\hfill $\Box$}
\begin{document}

\title{Some inequalities for tetrahedra}
\author{Jin-ichi Itoh, Jo\"{e}l Rouyer and Costin V\^{\i}lcu}
\maketitle

\begin{abstract}
We prove inequalities involving intrinsic and extrinsic radii and diameters of tetrahedra. 
\end{abstract}


\section{Introduction}

A {\it convex surface} $S$ is the boundary of a {\it convex body}
(compact convex set with interior points) in the Euclidean space $\R^3$,
or a doubly covered planar convex body; in the latter case it is called {\it degenerate}.
Denote by ${\cal S}$ the set of all convex surfaces.

The intrinsic metric $\rho$ of a convex surface $S$ is defined, for any points $x,y$ in $S$,
as the length $\rho (x,y)$ of a {\it geodesic segment} (\textit{i.e.}, shortest path on $S$)
joining $x$ to $y$.

Denote by $\mathrm{Diam}\left( S\right)$ the intrinsic diameter of
$S \in {\cal S}$, and by $\mathrm{diam}\left(S\right)$ its extrinsic
diameter,
$$\mathrm{Diam}\left( S\right) := \max_{x,y \in S} \rho(x,y), \;\;\;\;\;
\mathrm{diam}\left( S\right): = \max_{x,y \in S}||x-y||.$$

Denote by $\mathrm{Rad}\left(S\right)$ the intrinsic radius of
$S \in {\cal S}$, and by $\mathrm{rad}\left(S\right)$ its extrinsic
radius,
$$\mathrm{Rad}\left( S\right) := \min_{x \in S} \max_{y\in S} \rho(x,y), \;\;\;\;
\mathrm{rad}\left( S\right) := \min_{x \in S} \max_{y \in S}||x-y||.$$

The first three quantities introduced above proved useful for the study of convex surfaces, as one can briefly see in the following. 
But the fourth one, $\mathrm{rad}\left( S\right)$, seems somehow neglected, despite its natural definition.

N. P. Makuha \cite{ma} showed that 
$${\rm  Diam} (S) \leq \frac\pi2 {\rm  diam} (S)$$
holds for any convex surface $S$, with equality if and only if $S$ is a surface of revolution having constant width, 
see for instance \cite{cg} for definition and fundamental properties.

On the other hand, clearly 
$${\rm rad} (S) \leq {\rm  diam} (S) \leq 2 {\rm rad}S,$$ 
and the surfaces satisfying ${\rm rad} (S) = {\rm  diam} (S)$ have constant width.

One also has 
$${\rm Rad} (S) \leq {\rm  Diam} (S) \leq 2 {\rm Rad}(S),$$
and the surfaces satisfying ${\rm Rad} (S) = {\rm  Diam} (S)$ are studied in \cite{vz1}, while
those satisfying ${\rm  Diam} (S) = 2 {\rm Rad} (S)$ are studied in \cite{vz2} in relation to critical points for distance functions.

The space $\mathcal{T}$ of all the tetrahedra in $\mathbb{R}^{3}$, up to isometry and homothety, 
is studied in \cite{Ro-V} with respect to the number of local maxima of intrinsic distance functions.

The intrisic diameter and radius of a regular tetrahedron are computed by J. Rouyer in \cite{ro1}, while
V. Dods, C. Traub, and J. Yang \cite{dty} studied geodesics on the regular tetrahedron.

An old conjecture of A. D. Aleksandrov states that a convex surface with unit intrinsic diameter and largest area is a doubly covered disk.
V. A. Zalgaller \cite{Zalgaller} proved that among all tetrahedra with unit intrinsic diameter, 
only the regular tetrahedron with edges of length $\sqrt3 /2$ has the largest surface area, which is equal to $3\sqrt3 /4$.

In this note we prove new inequalities involving radii and diameters of tetrahedra.
In the next section we present some necessary preliminaries.
The main result in Section \ref{del} (Theorem \ref{TE1}) concerns the ratio $\frac{\mathrm{Diam}}{\mathrm{diam}}$, 
while the main result in Section \ref{radi} (Theorem \ref{Tder}) treats the ratio $\frac{\mathrm{Rad}}{\mathrm{diam}}$, 
both considered for $T \in \mathcal{T}$.
The division of our results into two main sections is just to ease the reading, their topics obviously overlap.

Concluding, we have the following inequalities for tetrahedra, either generally known or proven here; to ease the presentation, the functions are given without the argument
$T\in\mathcal{T}$.

\[
1\leq\frac{\mathrm{Diam}}{\mathrm{diam}}\leq\frac{2}{\sqrt{3}}, \hspace{0.5cm}
1 < \frac{\mathrm{Diam}}{\mathrm{Rad}}\leq2 , \hspace{0.5cm}
1 < \frac{\mathrm{diam}}{\mathrm{rad}} \leq 2 , \hspace{0.5cm}
\frac{\mathrm{Rad}}{\mathrm{diam}} \leq 1 \text{,}
\]
\[
1 \leq \frac{\mathrm{Rad}}{\mathrm{rad}} < 2, \hspace{0.5cm}
\frac{\sqrt{3}}{4} < \frac{\mathrm{rad}}{\mathrm{Diam}} < 1 \text{.}
\]

Of the above inequalities, some are not sharp and could be improved; for example,
our Open Problem \ref{conj} asks to prove (or disprove) that
\[
\frac2{\sqrt{3}} \leq \frac{\mathrm{Diam}\left( T\right) }{\mathrm{Rad}\left( T\right) }\text{.}
\]


\section{Preliminaries}

Let $P$ be (the surface of) a convex polyhedron. 

A {\em geodesic segment} on a $P$ is a shortest path between its extremities.

The {\em cut locus} $C(x)$ of the point $x$ on $P$ is the set of endpoints (different from $x$) of all nonextendable geodesic segments (on the surface $P$) starting at $x$.
Equivalently, it is the closure of the set of all those points $y$ to which there is more than one shortest path on $P$ from $x$.

The following lemma presents several known properties of cut loci on convex polyhedra, see e.g. \cite{code1}.

\begin{lm}
\label{basic}
(i) $C(x)$ has the structure of a finite $1$-dimensional simplicial complex which is a tree.
Its leaves (endpoints) are vertices of $P$, and all vertices of $P$, excepting $x$ (if the case), are included in $C(x)$.
All vertices of $P$ interior to $C(x)$ are considered as junction points.
 
(ii) Each point $y$ in $C(x)$ is joined to $x$ by as many geodesic segments as the number of connected
components of $C(x) \setminus {y}$.
For junction points in $C(x)$, this is precisely their degree in the tree.

(iii) The edges of $C(x)$ are geodesic segments on $P$.

(iv) Assume the geodesic segments $\gamma$ and $\gamma'$ from $x$ to $y \in C(x)$ are bounding a domain $D$ of $P$, 
which intersects no other geodesic segment from $x$ to $y$.
Then there is an arc of $C(x)$ at $y$ which intersects $D$ and it bisects the angle of $D$ at $y$.
\end{lm}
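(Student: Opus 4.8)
The plan is to reduce everything to two structural facts about a convex polyhedron $P$: its intrinsic metric is flat away from the finite vertex set $V$, with a cone point of positive curvature $\kappa_v=2\pi-\theta_v>0$ at each $v\in V$ (here $\theta_v$ is the total angle at $v$ and $\sum_{v\in V}\kappa_v=4\pi$); and no geodesic segment passes through a vertex, since at a vertex a geodesic would need angle $\ge\pi$ on both sides while the total angle is $<2\pi$. Fixing $x$, I would work with $d_x:=\rho(x,\cdot)$ and use that on $P\setminus(C(x)\cup\{x\})$ each point has a neighborhood that develops isometrically onto a planar disk on which $d_x$ is the distance to the single image of $x$ along the unique shortest path; in particular $d_x$ is real-analytic there. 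I would also invoke the source unfolding: cutting $P$ along $C(x)$ produces a single star-shaped planar polygon $\Delta$, with $\partial\Delta$ mapping onto $C(x)$.

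Next I would establish (iii) and the combinatorial part of (i). Fix $y\in C(x)$ with $y\notin V\cup\{x\}$ and a small geodesic disk $U\ni y$ isometric to a planar disk; each homotopy class of shortest paths from $x$ reaching $U$ contributes an image point $\tilde x_i\in\R^2$ with $d_x(z)=\min_i|z-\tilde x_i|$ on $U$ over finitely many indices, all $|\tilde y-\tilde x_i|$ equal to $\rho(x,y)$ for the classes minimizing at $y$. Hence near $y$ the set $C(x)$ is a finite union of arcs of the bisectors $|z-\tilde x_i|=|z-\tilde x_j|$, each a straight segment, so $C(x)$ is locally a finite graph with locally geodesic edges, and a point interior to an edge has exactly two shortest paths from $x$. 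Finiteness is global because $C(x)$ meets each of the finitely many faces of $P$ in finitely many such segments. That an edge of $C(x)$ is not merely straight but a shortest path on $P$ I would get from the source unfolding: the two faces of $P\setminus C(x)$ flanking the edge develop isometrically onto star-shaped planar regions centred at the two relevant images of $x$, so any competitor to the edge would develop to a shorter planar path between fixed endpoints, a contradiction.

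For the rest of (i): a vertex $v\ne x$ must lie in $C(x)$, for otherwise a punctured neighborhood of $v$ would carry a unique shortest path to each point and would develop isometrically onto a planar region, forcing $\theta_v=2\pi$; and a leaf of $C(x)$ cannot sit in $V^{c}\setminus\{x\}$, since in a flat region a bisector arc continues (or branches) rather than terminating, so every leaf is a vertex, the remaining vertices being junction points by definition. To see that $C(x)$ is a tree I would use the source unfolding again: $\partial\Delta$ is connected and maps onto $C(x)$, so $C(x)$ is connected; and regarding $\Delta$ as the unique $2$-cell of a CW structure on $P$ with $1$-skeleton $C(x)$, the Euler count $V-E+1=\chi(P)=2$ gives $\chi(C(x))=V-E=1$, so $C(x)$ is a connected graph with Euler characteristic $1$, hence a tree.

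Finally (iv), which I regard as the crux, and then (ii). Given $y$ and two shortest paths $\gamma,\gamma'$ from $x$ bounding a domain $D$ whose interior contains no other shortest path from $x$ to $y$: near $y$ the domain $D$ is flat, so I develop a neighborhood of $y$ in $D\cup\gamma\cup\gamma'$ onto a planar wedge at $\tilde y$ with sides the developments of $\gamma,\gamma'$ and with the two images $\tilde x',\tilde x''$ of $x$ satisfying $|\tilde y-\tilde x'|=|\tilde y-\tilde x''|=\rho(x,y)$. Since two shortest paths from $x$ to $y$ meet only at $x$ and $y$, no third class realizes $\rho(x,y)$ at $y$ through the interior of $D$; so for $z$ close to $y$ in $D$ the only competing classes are those of $\gamma$ and $\gamma'$, whence $C(x)\cap D$ near $y$ is the perpendicular bisector of $\tilde x'\tilde x''$, which passes through $\tilde y$ and hence bisects $\angle\,\tilde x'\tilde y\tilde x''$, the angle of $D$ at $y$; a short separate check (needed when that angle exceeds $\pi$) shows the bisecting ray enters the development of $D$. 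Granting (iv), (ii) is immediate: the shortest paths from $x$ to $y$ have pairwise distinct directions at $y$ (no shortest path passes through a vertex), they cut the total angle at $y$ into consecutive wedges, each wedge carries exactly one arc of $C(x)$ by (iv), and a cut arc cannot cross a shortest path; hence the number of arcs of $C(x)$ issuing from $y$ — equivalently the number of components of $C(x)\setminus\{y\}$, which for a junction point is its degree — equals the number of shortest paths from $x$ to $y$. The two delicate points are this development argument for (iv) (controlling the other classes near $y$ and the placement of the bisecting ray) and the passage in (iii) from ``straight'' to ``globally shortest''; both are where I would lean on the cited treatment.
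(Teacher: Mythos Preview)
The paper does not prove this lemma at all: it is stated as a collection of known properties with a citation to Agarwal--Aronov--O'Rourke--Schevon, and no argument is given. Your sketch is therefore not comparable to anything in the paper, but it is a faithful outline of the standard proof one finds in the cited literature: local development to the plane to see $C(x)$ as a union of bisector segments, the cone-angle obstruction forcing every non-source vertex into $C(x)$ and every leaf to be a vertex, the Euler-characteristic count (equivalently, simple connectivity of the source unfolding) to get the tree structure, and the bisector computation for (iv) feeding back into (ii). The two points you flag as delicate---that a straight edge of $C(x)$ is globally shortest, and the placement of the bisecting ray when the wedge angle at $y$ exceeds $\pi$---are indeed the places where a full proof needs more than a sentence, and are exactly where the paper's citation does the work; your instinct to defer to the reference there is appropriate.
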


We shall implicitely use Alexandrov's Gluing Theorem stated below, see \cite{code2}, p.100.

\begin{lm}
\label{gluing}
Consider a topological sphere $S$ obtained by gluing planar polygons (i.e., naturally identifying pairs of sides of the same length) such that 
at most $2\pi$ angle is glued at each point.
Then $S$, endowed with the intrinsic metric induced by the distance in $\R^2$, 
is isometric to a polyhedral convex surface $P\subset\R^3$, possibly degenerated. 
Moreover, $P$ is unique up to rigid motion and reflection in $\R^3$.
\end{lm}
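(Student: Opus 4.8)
The plan is to reduce the statement to two classical pillars of Alexandrov's theory: the \emph{realization theorem} for intrinsic metrics of nonnegative curvature on $S^{2}$ (for existence) and an extended \emph{Cauchy rigidity theorem} for convex polytopes (for uniqueness). First one checks that the identification space $S$ is a topological $2$-sphere carrying a polyhedral metric: away from the images of the polygon vertices it is locally isometric to the Euclidean plane, while at the image of a vertex the metric is that of a Euclidean cone of total angle $\omega$ equal to the sum of the glued face angles, hence $\omega\le 2\pi$ by hypothesis. So $S$ has nonnegative curvature in Alexandrov's sense, with curvature atoms $2\pi-\omega\ge 0$ at finitely many points, of total mass $4\pi$ by Gauss--Bonnet; the hypothesis ``at most $2\pi$ at each point'' is precisely what forbids a hyperbolic cone point.

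For existence I would use the method of continuity. Fix $n$ bounding the number of cone points (one may also allow finitely many ``flat'' marked points with cone angle exactly $2\pi$). Let $\mathcal{P}_{n}$ be the space of convex polytopes in $\R^{3}$ with at most $n$ vertices --- including degenerate doubly covered polygons --- modulo rigid motions, and $\mathcal{M}_{n}$ the space of polyhedral sphere metrics with at most $n$ cone points of angle $\le 2\pi$. The ``induced intrinsic metric'' map $\Phi\colon\mathcal{P}_{n}\to\mathcal{M}_{n}$ is continuous; one shows it is injective (by the rigidity below), proper, and, near nondegenerate polytopes, a local homeomorphism because $\dim\mathcal{P}_{n}=\dim\mathcal{M}_{n}$ and $d\Phi$ is nondegenerate there. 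Connectedness of $\mathcal{M}_{n}$ then forces $\Phi$ onto by a degree / invariance-of-domain argument, producing $P$; a metric on the boundary of the family (some dihedral angles equal to $\pi$) is realized by the degenerate doubly covered polygon allowed in the statement.

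The uniqueness (``moreover'') clause is Alexandrov's uniqueness theorem: isometric convex polyhedral surfaces are congruent up to a rigid motion and a reflection. For combinatorially identical polytopes this is Cauchy's theorem (the arm lemma together with a sign-change count around the vertices of the $1$-skeleton); in general one passes to the common geodesic subdivision of the two surfaces induced by the isometry --- introducing edges of dihedral angle $\pi$ --- and runs Cauchy's argument in the version that tolerates such flat edges and vertices.

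The technical heart --- which I would not attempt to redo here --- is the openness step: the infinitesimal rigidity of convex polytopes, together with the bookkeeping near degenerate configurations and near cone angles exactly $2\pi$. This is the core of Alexandrov's theorem, which for our purposes we simply invoke in the stated form (see \cite{code2}, p.~100).
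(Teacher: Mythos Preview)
Your sketch is a correct outline of the classical proof of Alexandrov's gluing theorem via the continuity method for existence and the Cauchy--Alexandrov rigidity argument for uniqueness. The paper, however, does not prove this lemma at all: it is stated as a known result and simply attributed to Alexandrov (\cite{code2}, p.~100), exactly the reference you yourself invoke at the end of your sketch. So there is no discrepancy of approach to discuss; you have supplied strictly more than the paper does, and what you supplied is the standard argument.
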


In some sense opposite to Alexandrov's Gluing Theorem is the operation of unfolding.
The first two general methods known to unfold the surface $P$ of any convex polyhedron to a simple (non-overlapping) polygon in the plane 
are the source unfolding and the star unfolding, both with respect to a point $x \in P$.

Concerning the \emph{source unfolding}, one cuts $P$ along the cut locus of the point $x$; this has been studied
for polyhedral convex surfaces since~\cite{SS} (where the cut locus is called the ``ridge tree'').

Concerning the \emph{star unfolding}, one cuts  $P$ along the shortest paths (supposed unique) from $x$ to every vertex of $P$.
The idea goes back to Alexandrov \cite{code2}; the fact that it unfolds $P$ to a non-overlapping polygon was established in \cite{ao}.
\bigskip

An \emph{isosceles tetrahedron} is a a tetrahedron whose opposite edges are pairwise equal.
We shall make use of these tetrahedra and of their special properties, see e.g. \cite{Leech}.

\begin{lm}
\label{acute}
For any isosceles tetrahedron, the total angle at each vertex is precisely $\pi$
and its faces are acute triangles.

Consequently, the star unfolding of an isosceles tetrahedron with respect to any of its vertices provides an acute planar triangle. 
\end{lm}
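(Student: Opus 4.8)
The plan is to exploit the defining property of an isosceles tetrahedron — that opposite edges are equal — to show that all four faces are congruent triangles, and then extract both conclusions from this. Let the tetrahedron have vertices $A,B,C,D$, with $|BC|=|AD|=a$, $|CA|=|BD|=b$, $|AB|=|CD|=c$. Then each of the four faces has side lengths $\{a,b,c\}$: for instance face $ABC$ has sides $c,a,b$ and face $ABD$ has sides $c,b,a$. Hence all four faces are congruent. First I would observe that the total angle at a vertex, say $A$, is the sum of the three face angles at $A$, one from each of the faces $ABC$, $ACD$, $ABD$; because the faces are congruent triangles with the same three side lengths, these three angles are exactly the three angles of the triangle with sides $a,b,c$, so their sum is $\pi$. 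The same computation works at every vertex, giving the first assertion.

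Next I would prove the faces are acute. Since all faces are congruent, it suffices to show the triangle $\Delta$ with sides $a,b,c$ is acute, i.e. that $a^2<b^2+c^2$ and cyclically. Here I would use that the tetrahedron is genuinely three‑dimensional (a nondegenerate tetrahedron), which forces a strict inequality. The cleanest route is the classical fact that an isosceles tetrahedron is precisely the one obtained by taking an acute triangle and folding it along the three midlines (equivalently, it inscribes in a rectangular parallelepiped, with the six edges being face diagonals of the box): if the box has edge lengths $p,q,r$ then $a^2=q^2+r^2$, $b^2=r^2+p^2$, $c^2=p^2+q^2$, whence $b^2+c^2-a^2=2p^2>0$, and cyclically, so $\Delta$ is acute. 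For a self‑contained argument I would instead note that the tetrahedron with congruent faces has a circumscribed parallelepiped, or alternatively compute the Cayley–Menger determinant and check that nondegeneracy (positive volume) is equivalent to $\Delta$ being acute; either way the strictness of the acuteness is exactly nondegeneracy. I expect this step — cleanly justifying why nondegeneracy forces \emph{acute} rather than merely nonobtuse faces — to be the main obstacle, though the box model makes it routine once set up.

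Finally, for the consequence about the star unfolding: fix a vertex, say $D$, and unfold the three faces meeting at $D$ around it; since the total angle at $D$ is $\pi$ by the first part, the three copies of $D$‑angles lay out along a straight line, so the boundary of the unfolded region is a single segment, and the star unfolding with respect to $D$ is the triangle whose vertices are the three unfolded copies of the apexes $A,B,C$. It remains to identify this triangle. Here I would invoke Lemma~\ref{basic} together with Lemma~\ref{gluing}: the star unfolding of an isosceles tetrahedron with respect to a vertex is the original triangle $\Delta$ with sides $a,b,c$ (the face angles at $D$ are the three angles of $\Delta$, and they glue back correctly). Since $\Delta$ is acute by the previous paragraph, the star unfolding is an acute planar triangle, as claimed. $\Box$
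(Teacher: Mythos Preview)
The paper does not prove this lemma; it simply cites \cite{Leech} for the properties of isosceles tetrahedra. Your arguments for the first two assertions are correct and standard: congruence of the four faces yields the angle sum $\pi$ at every vertex, and the rectangular-box model (edge lengths $p,q,r$, face diagonals $a,b,c$) shows the common face triangle is strictly acute precisely when the tetrahedron is nondegenerate.

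Your treatment of the ``consequently'' clause, however, has the picture of the star unfolding backwards. In the star unfolding with respect to $D$ one cuts along $DA$, $DB$, $DC$; these three cuts all meet at $D$, so it is $D$ that is split into three images $D_1,D_2,D_3$, while $A$, $B$, $C$ each remain a single boundary point. Anchoring at the face $ABC$ and flipping the three incident faces outward gives, a priori, a hexagon $A\,D_1\,B\,D_2\,C\,D_3$. The reason this hexagon is actually a triangle is that the interior angle of the hexagon at $A$ (and likewise at $B$, $C$) equals the total surface angle there, which is $\pi$ by your first part; hence $A,B,C$ are flat points lying on the sides of the triangle $D_1D_2D_3$ (in fact at their midpoints, since $|AD_1|=|AD_3|=|AD|$, etc.). The fact you invoke, that the total angle at $D$ equals $\pi$, is not the operative reason; it only records that the three corner angles at $D_1,D_2,D_3$ (the face angles at $D$) sum to $\pi$, as they must in any triangle. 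The resulting triangle has side lengths $2|AD|,\,2|BD|,\,2|CD|=2a,2b,2c$, so it is similar to (not equal to) the face triangle $\Delta$ and is therefore acute. Lemmas~\ref{basic} and~\ref{gluing} play no role here; the argument is purely a planar unfolding computation.
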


Some extremal cases in our inequalities are attained by what we call \emph{$\e$-thick tetrahedra}.
Such a tetrahedron is, by definition, a tetrahedron $T$ with one edge included
 in a ball of radius $\varepsilon {\rm diam}(T)$ centered at the midpoint of its longest edge. 
An $\e$-thick tetrahedron is said to be \emph{normal} if its longest edge
and the one opposite to it are, on the one hand, normal to each other, and on
the other hand, normal to the line through their midpoints.

\bigskip

For $x \in S$ put ${\rm rad} _x = \max_{y\in S} ||x-y||$, hence ${\rm rad} _x \geq {\rm rad} (S)$.
Also, put ${\rm Rad} _x = \max_{y\in S} \rho(x,y)$, hence ${\rm Rad} _x \geq {\rm Rad} (S)$.

Denote by $f_x$ the set of all extrinsic farthest points from $x \in S$; i.e., 
$f_x=\{y\in S: ||x-y||= {\rm rad} _x \}$.
Also, denote by $F_x$ the set of all intrinsic farthest points from $x \in S$, and call them \emph{antipodes of} $x$; i.e., 
$F_x=\{y\in S: \rho(x,y)= {\rm Rad} _x \}$.


\section{Diameters}
\label{del}

A very nice and deep result of J. O'Rourke and C. A. Schevon \cite{osc} states the following:
if the points $x,y$ in the polyhedral convex surface $P$ realize the intrinsic diameter of $P$
then at least one of them is a vertex of $P$, or they are joined by at least five distinct geodesic segments.
For tetrahedra it directly implies the next lemma.

\begin{lm}
\label{L1}
If $T\in\mathcal{T}$ and $x\in T$ is a point with $\mathrm{Rad}_x =\mathrm{ Diam}(T)$ then $x$ is either a vertex or an antipode of a vertex.
\end{lm}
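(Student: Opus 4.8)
The plan is to invoke the O'Rourke--Schevon theorem directly and then eliminate the second alternative it offers. Suppose $x \in T$ satisfies $\mathrm{Rad}_x = \mathrm{Diam}(T)$, and let $y \in F_x$ be an antipode of $x$, so that $\rho(x,y) = \mathrm{Rad}_x = \mathrm{Diam}(T)$; thus the pair $(x,y)$ realizes the intrinsic diameter of $T$. By the theorem of O'Rourke and Schevon, either one of $x$, $y$ is a vertex of $T$, or $x$ and $y$ are joined by at least five distinct geodesic segments. I would first dispose of the easy sub-case: if $y$ is a vertex, then $x$ is an antipode of that vertex and we are done; if $x$ itself is a vertex, we are immediately done. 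So the work is concentrated in showing that the ``five geodesics'' alternative forces $x$ (or $y$) to be a vertex anyway.

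The key step is therefore a counting argument on the cut locus. Suppose $x$ and $y$ are joined by at least five distinct geodesic segments and neither is a vertex. Since $y$ is joined to $x$ by more than one shortest path, $y \in C(x)$, and by Lemma~\ref{basic}(ii) the number of geodesic segments from $x$ to $y$ equals the number of connected components of $C(x) \setminus \{y\}$; for a point of the tree $C(x)$ this is its degree (a leaf has degree $1$, an ordinary interior point degree $2$, a junction point degree equal to its branching number). Hence $y$ is a junction point of $C(x)$ of degree at least $5$. Now $C(x)$ is a finite tree (Lemma~\ref{basic}(i)) whose leaves are all vertices of $T$, and $T$ has exactly four vertices, one of which may coincide with $x$; since $x$ is assumed not to be a vertex, all four vertices of $T$ lie in $C(x)$, and since $x \notin C(x)$, the tree $C(x)$ has at most four leaves. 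A tree with at most four leaves can have at most two junction points (of degree $\geq 3$), and the sum of their excess degrees over the tree is bounded: more precisely, in any finite tree $\sum_{v}(\deg v - 2) = -2$ summed over all vertices, equivalently $\sum_{\text{leaves}} 1 + \sum_{v}(\deg v - 2)_{+} \cdot(\text{something})$ — the clean statement is that the number of leaves equals $2 + \sum_{v}(\deg v - 2)$ where the sum runs over internal vertices with $\deg v \geq 3$. With at most four leaves this gives $\sum (\deg v - 2) \leq 2$, so no internal vertex can have degree $\geq 5$ (that alone would contribute $\geq 3$). This contradicts $\deg y \geq 5$, and the proof is complete.

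The main obstacle I anticipate is making the tree-combinatorics step fully rigorous, specifically the bookkeeping of which vertices of $T$ are forced to be leaves of $C(x)$ versus possibly interior junction points, and the degenerate possibility that $x$ coincides with a vertex (handled trivially) or that some vertex of $T$ is an interior junction point of $C(x)$ rather than a leaf — this only \emph{reduces} the number of available leaves, which strengthens the bound, so the inequality ``at most four leaves'' is safe. One should also note that the five geodesics in the O'Rourke--Schevon alternative are required to be \emph{distinct}, which is exactly what feeds into the component-count via Lemma~\ref{basic}(ii); a small check that ``distinct geodesic segments from $x$ to $y$'' correspond bijectively to distinct components of $C(x) \setminus \{y\}$ is worth a sentence. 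Finally, I would remark that the argument in fact shows more: if neither $x$ nor $y$ is a vertex then at most four geodesics can join them, so the ``five geodesics'' case never occurs for a tetrahedron, which is the sharp form of the reduction.
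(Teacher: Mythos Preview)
Your proposal is correct and follows exactly the route the paper indicates: the paper simply states that the O'Rourke--Schevon criterion ``directly implies'' the lemma for tetrahedra, and your cut-locus degree count (at most four leaves, hence junction excess $\leq 2$, hence no point of degree $\geq 5$) is precisely the missing one-line justification. The only spot worth tightening in a final write-up is the sentence ``if $y$ is a vertex, then $x$ is an antipode of that vertex'': this needs the (trivial) observation that $\rho(x,y)=\mathrm{Diam}(T)\geq\mathrm{Rad}_y\geq\rho(y,x)$ forces $x\in F_y$.
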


The extrinsic analog of O'Rourke and Schevon's criterion for diametral points is a simple result, of some interest in itself.

\begin{prop}
\label{LED}
Let $x$, $y\in P$.

(1) If $y\in f_x$ then $y$ is a vertex of $P$. In particular, if $\left\Vert x-y\right\Vert =\mathrm{diam}\left( P\right)$ then both points are vertices of $P$.
 
(2) If $f_x=\{y\}$ and $\left\Vert x-y\right\Vert =\mathrm{rad}\left( P\right)$ then $x$ is the foot of $y$ onto a face.
\end{prop}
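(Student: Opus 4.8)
The plan is to handle both parts by convexity of the extrinsic distance, transferring the problem from the surface $P$ to the convex body $B\subset\R^{3}$ it bounds, whose extreme points are exactly the vertices of $P$. For (1), first note that if $z\in B$, then extending the segment $[x,z]$ past $z$ until it meets $\partial B=P$ produces a point $w\in P$ with $\|x-w\|\ge\|x-z\|$ (here $x\in P\subset B$); hence $\rad_x=\max_{z\in P}\|x-z\|=\max_{z\in B}\|x-z\|$. Since $z\mapsto\|x-z\|^{2}$ is strictly convex, it attains its maximum over the compact convex set $B$ only at extreme points, i.e.\ only at vertices of $P$; thus every $y\in f_x$ is a vertex. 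The ``in particular'' follows at once: $\|x-y\|=\diam(P)$ forces $y\in f_x$ and, symmetrically, $x\in f_y$, so both points are vertices.

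For (2), set $g(z):=\rad_z$. By part (1) the extrinsic farthest points from any $z$ are vertices, so $g(z)=\max_{v\in V}\|z-v\|$, $V$ being the finite vertex set of $P$. Since $f_x=\{y\}$, we have $\|x-y\|>\|x-v\|$ for all $v\in V\setminus\{y\}$, so by continuity $g\equiv\|\cdot-y\|$ on a neighbourhood of $x$ in $P$. As $\|x-y\|=\rad(P)=\min_{z\in P}g(z)$, the point $x$ is a local minimizer along $P$ of the smooth convex function $\phi:=\|\cdot-y\|$, whose gradient at $x$ is the unit vector $u:=(x-y)/\|x-y\|$.

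It remains to locate $x$ in the relative interior of a face. Local minimality gives $\langle u,d\rangle\ge0$ for every direction $d$ tangent to $P$ at $x$. Suppose $x$ lay on an edge or were a vertex; then the tangent directions to $P$ at $x$ generate, under non-negative combinations, the whole tangent cone $T_B(x)$ of the body, so $\langle u,d\rangle\ge0$ for every $d\in T_B(x)$, and in particular $\langle x-y,z-x\rangle\ge0$ for every $z\in B$. Taking $z=y\in B$ yields $-\|x-y\|^{2}\ge0$, whence $x=y$, contradicting $\|x-y\|=\rad(P)>0$. Therefore $x$ lies in the relative interior of some face $\Phi$; minimality of $\phi$ on the flat region $\Phi$ then forces $u$ to be orthogonal to $\Phi$, i.e.\ $x$ is the foot of the perpendicular from $y$ onto $\Phi$.

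I expect the one genuinely delicate point to be the claim that, at an edge or a vertex, the $P$-tangent directions positively span the body's tangent cone (equivalently, in the language of normal cones, that $\bigcap_{\Phi\ni x}N_\Phi(x)=N_B(x)$ at such points, whereas at a face-interior point the left-hand side is only the normal line, which is exactly what gives the desired orthogonality); this is a small but necessary polyhedral computation, while the remaining steps are routine convexity.
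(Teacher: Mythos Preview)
Your argument is correct. Both parts rest on the same idea as the paper's proof---first-order optimality of the Euclidean distance function---but you package it in convex-analysis language (extreme points of $B$ for (1), tangent cones for (2)) where the paper argues directly with angles. For (1) the paper simply observes that if $y$ were interior to a segment $[uv]\subset P$ then one of $\angle xyu$, $\angle xyv$ is at least $\pi/2$, so $y$ cannot be farthest; your extreme-point formulation is the global version of the same fact. For (2) the paper asserts, essentially without proof, that ``since $P$ is convex'' one can find $z\in P$ near $x$ with $\angle zxy<\pi/2$ whenever $x$ is not the foot of $y$ onto a face; your tangent-cone computation (that at an edge or vertex the $P$-tangent directions positively generate $T_B(x)$, hence $y-x$ itself lies in their conic hull) is exactly the justification this step needs, and it is correct: at an edge the interior dihedral angle is less than $\pi$, so the two bounding half-planes conically span the wedge, and at a vertex the emanating edge directions already generate $T_B(v)$. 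So your route is not genuinely different, but it is more explicit where the paper is terse.
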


\begin{proof}
(1) Assume that $y$ is not a vertex of $P$. Then there exists some line segment $\left[ uv\right]$ on $P$ containing $y$ in its relative interior. Since $\angle xyu+\angle xyv=\pi$, one of these two angles, say $\angle xyu$, is at least $\pi/2$, whence $\left\Vert x-y\right\Vert<\left\Vert x-u\right\Vert$, in contradiction with $y\in f_x$.

(2) Assume now that $\mathrm{rad}\left( P\right)=\left\Vert x-y\right\Vert$ and 
$f_x=\{y\}$, \ie, $\left\Vert x-y\right\Vert>\left\Vert x-v\right\Vert$ for any vertex $v\ne y$. 
By continuity and (1), there is a neighbourhood $N$ of $x$ such that,
for any point $z\in N$, $f_x=\{y\}$.  
Assume now that $x$ is not the foot of $y$ onto a face; then, since $P$ is convex, one can find points $z \in N$ such that $\angle zxy < \pi/2$, and
$\mathrm{rad_z}=\left\Vert z-y\right\Vert<\left\Vert z-x\right\Vert=\mathrm{rad}(P)$, a contradiction.
\end{proof}

\bigskip

In the above Proposition, if $||x-y|| =\mathrm{rad}\left( P\right)$ for $y \in f_x$ and $f_x$ contains at least two points then $x$ may not be the foot of $y$ onto a face.
For example, consider a normal $\varepsilon$-thick tetrahedron, with $y,z$ the vertices of the longest edge and $x$ its mid-point.

\begin{co}
\label{reg_ext}
For the regular tetrahedron $T$ of unit edge, ${\rm diam} (T) = 1$ and ${\rm rad} (T) = \sqrt{\frac23}$.
\end{co}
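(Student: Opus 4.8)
The plan is to handle the two quantities separately; each takes only a few lines.

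For the extrinsic diameter: by Proposition \ref{LED}(1), any pair of points realizing $\mathrm{diam}\left(T\right)$ consists of two vertices of $T$. Since every edge of the regular tetrahedron of unit edge has length $1$, all six pairs of vertices are at mutual distance $1$, so $\mathrm{diam}\left(T\right)=1$.

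For the extrinsic radius I will exhibit an optimal point and match it with a lower bound valid at every point of the surface. Label the vertices $A,B,C,D$ and let $h$ be the height of $T$. A routine computation — place the centroid of face $BCD$ at the origin, recall that the circumradius of a unit equilateral triangle is $1/\sqrt3$, and solve $h^{2}+1/3=1$ — gives $h=\sqrt{2/3}$. Now take $x=G$, the centroid of the face $BCD$, i.e.\ the foot of $A$ onto that face. By Proposition \ref{LED}(1) (or simply because the point of a convex body farthest from a given point is an extreme point, the function $y\mapsto\left\Vert x-y\right\Vert$ being convex), we have $\mathrm{rad}_G=\max\{\left\Vert G-A\right\Vert,\left\Vert G-B\right\Vert,\left\Vert G-C\right\Vert,\left\Vert G-D\right\Vert\}$; here $\left\Vert G-A\right\Vert=h=\sqrt{2/3}$ while $\left\Vert G-B\right\Vert=\left\Vert G-C\right\Vert=\left\Vert G-D\right\Vert=1/\sqrt3<\sqrt{2/3}$, so $\mathrm{rad}_G=\sqrt{2/3}$ and hence $\mathrm{rad}\left(T\right)\le\sqrt{2/3}$.

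For the reverse inequality, let $x$ be an arbitrary point of $T$. The four closed faces cover the surface, so $x$ lies in some closed face $F$; let $v$ be the vertex opposite $F$. Then $\left\Vert x-v\right\Vert$ is at least the distance from $v$ to the plane of $F$, which is $h$, whence $\mathrm{rad}_x\ge\left\Vert x-v\right\Vert\ge h=\sqrt{2/3}$. Taking the infimum over $x\in T$ yields $\mathrm{rad}\left(T\right)\ge\sqrt{2/3}$, and combined with the previous paragraph, $\mathrm{rad}\left(T\right)=\sqrt{2/3}$. There is essentially no obstacle in this argument; the only points requiring care are the value $h=\sqrt{2/3}$ of the height and the strict inequality $1/\sqrt3<\sqrt{2/3}$, which guarantees that the vertex farthest from $G$ is $A$ rather than $B$, $C$ or $D$. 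It is worth noting in passing that the extremal point $G$ is exactly the foot of a vertex onto a face, as predicted by Proposition \ref{LED}(2).
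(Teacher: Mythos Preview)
Your proof is correct and follows the approach the paper intends: the corollary is stated without proof immediately after Proposition~\ref{LED}, and your argument---using part (1) to reduce both $\mathrm{diam}$ and $\mathrm{rad}_x$ to vertex distances, then computing $h=\sqrt{2/3}$ and bounding $\mathrm{rad}_x$ from below via the opposite vertex---is exactly the intended filling-in of details. Your closing remark that the extremal point $G$ is the foot of a vertex, matching Proposition~\ref{LED}(2), is a nice consistency check.
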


Corollary \ref{reg_ext} is the extrinsic analog of Theorem 3.1 in \cite{ro1}, 
quoted in the  next lemma to clarify the second equality case in Theorem \ref{TE1}.

\begin{lm}
\label{reg_int}
For the regular tetrahedron $T$ of unit edge, ${\rm Diam} (T) =  \frac2{\sqrt{3}}$ is realized between any vertex and the centre of its opposite face,
while ${\rm Rad} (T) = 1$ is realized between mid-points of opposite edges.
\end{lm}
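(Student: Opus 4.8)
This is \cite[Theorem 3.1]{ro1}; I sketch the argument I would follow. Write $T=ABCD$ for the regular tetrahedron of unit edge, let $G_{v}$ be the centroid of the face of $T$ opposite a vertex $v$, and let $m_{e}$ be the midpoint of an edge $e$. Since $T$ is isosceles, Lemma \ref{acute} gives that the total angle at each vertex is $\pi$, so the only singular points of $T$ are its four vertices (each of curvature $\pi$), while midpoints of edges and interior points of faces are smooth.

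\emph{The diameter.} I would first describe the cut locus $C(A)$ of a vertex. By Lemma \ref{basic}(i) it is a tree whose leaves are the vertices $B,C,D$ and which contains no further vertex of $T$; a tree with three leaves has exactly one branch point, and by invariance under the rotation of order $3$ fixing $A$ this point is the unique fixed point of that rotation lying in the face $BCD$, namely $G_{A}$. Hence $C(A)$ is the union of the three medians $G_{A}B$, $G_{A}C$, $G_{A}D$ of the triangle $BCD$, and by Lemma \ref{basic}(ii) the point $G_{A}$ is joined to $A$ by exactly three shortest paths. Developing the faces $ABC$ and $BCD$ across $BC$, one of these paths becomes a straight segment of length $2/\sqrt{3}$ lying inside the two developed faces, so $\rho(A,G_{A})=2/\sqrt{3}$; the same development shows $\rho(A,\cdot)$ increases strictly from each leaf toward $G_{A}$ (the foot of the perpendicular from the developed copy of $A$ onto the line of a median lands exactly at the leaf). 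Since $\rho(A,\cdot)$ is smooth with non-vanishing gradient off $C(A)\cup\{A\}$, its maximum lies on $C(A)$; hence $F_{A}=\{G_{A}\}$ and $\mathrm{Rad}_{A}=2/\sqrt{3}$. Now if $\rho(x,y)=\mathrm{Diam}(T)$ then $\mathrm{Rad}_{x}=\mathrm{Rad}_{y}=\mathrm{Diam}(T)$, so by Lemma \ref{L1} each of $x,y$ lies in the eight-point set $\{A,B,C,D,G_{A},G_{B},G_{C},G_{D}\}$. Among these the distances are $1$ (vertex to vertex), $1/\sqrt{3}$ (vertex to the centroid of an incident face, and centroid to centroid), and $2/\sqrt{3}$ (vertex to the centroid of its opposite face); the largest is $2/\sqrt{3}$, attained exactly by the last type of pair, which gives $\mathrm{Diam}(T)=2/\sqrt{3}$ realised precisely between a vertex and the centroid of its opposite face.

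\emph{The radius, upper bound.} I would bound $\mathrm{Rad}(T)\le\mathrm{Rad}_{m}$ for $m=m_{AB}$. The Klein four-group $\{\mathrm{id},(AB),(CD),(AB)(CD)\}$ stabilises $m$, and its only other fixed point on $T$ is $m_{CD}$. The tree $C(m)$ has its leaves among $\{A,B,C,D\}$ and no other vertex of $T$; it cannot have two branch points of degree $3$, since each would be a common fixed point of the four-group and besides $m$ there is only one such point, so $C(m)$ is a single star with its four edges joining $m_{CD}$ to $A,B,C,D$. Developing suitable pairs of faces one checks $\rho(m,m_{CD})=1$ (the geodesic crosses $AC$ at its midpoint) and that $\rho(m,\cdot)$ is monotone along each edge of $C(m)$ with maximum $1$ at $m_{CD}$; hence $\mathrm{Rad}_{m}=1$, $F_{m}=\{m_{CD}\}$, and $\mathrm{Rad}(T)\le 1$.

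\emph{The radius, lower bound --- the main obstacle.} It remains to prove $\mathrm{Rad}_{x}\ge 1$ for every $x\in T$. This does not reduce to a finite list of target points: for $x=m_{AB}$ all four vertices and all four face centroids are at distance $<1$ (the farthest point being $m_{CD}$), while for $x=A$ all six edge midpoints are at distance $<1$ (the farthest point being $G_{A}$), so the far point genuinely migrates over the surface with $x$. The plan is to use the order-$24$ symmetry group of $T$ to restrict $x$ to a fundamental triangle with corners a vertex $A$, an incident edge midpoint $m_{AB}$, and an incident face centroid $G_{D}$; on this region one determines, by developing, the farthest-point map $x\mapsto F_{x}$ --- whose values turn out to lie in the ``antipodal'' fundamental triangle, with corners $G_{A}$, $m_{CD}$, $D$ --- writes $\mathrm{Rad}_{x}$ as a piecewise-smooth function, and minimises it. The minimum equals $1$ and is attained at the corner $m_{AB}$, which together with the upper bound gives $\mathrm{Rad}(T)=1$ realised between midpoints of opposite edges. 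I expect this last minimisation, together with the bookkeeping of which development computes $\rho(x,\cdot)$ on each sub-region of the fundamental triangle, to be the genuinely laborious step; everything before it is routine cut-locus analysis.
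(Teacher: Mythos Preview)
The paper does not prove this lemma at all: it is stated as a quotation of \cite[Theorem~3.1]{ro1}, with no argument given. Your proposal correctly identifies the source and then goes further by sketching a proof, so there is nothing in the paper to compare step-by-step.

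On the sketch itself: the diameter computation is sound. For the radius upper bound, your determination of $C(m_{AB})$ reaches the right conclusion but the justification ``it cannot have two branch points of degree $3$, since each would be a common fixed point of the four-group'' is not quite complete --- the Klein four-group could a priori swap two degree-$3$ branch points rather than fix each. A cleaner route is to exhibit directly the four length-$1$ geodesics from $m_{AB}$ to $m_{CD}$ (one across each of $AC$, $AD$, $BC$, $BD$), which forces $m_{CD}$ to have degree $4$ in $C(m_{AB})$ by Lemma~\ref{basic}(ii); a tree with at most four leaves and a degree-$4$ vertex is necessarily the $X$-shape. For the radius lower bound you give only a plan, and you are right that the piecewise computation of $x\mapsto F_x$ over a fundamental triangle is the substantive work of \cite{ro1}; nothing you could write in a short sketch would replace it. Since the paper itself defers entirely to that reference, your proposal is already more detailed than what the paper provides.
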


Lemma \ref{basic} immediately implies the next one.
An {\it Y-tree} is a tree with one junction point and three edges.

\begin{lm}
\label{L2}
The cut locus of a vertex of $T\in\mathcal{T}$ is a (possibly degenerate) $Y$-tree.
\end{lm}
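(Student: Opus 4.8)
The plan is to read the conclusion off the tree structure of $C(x)$ provided by Lemma~\ref{basic}, using only that a tetrahedron has four vertices together with the elementary Euler relation for finite trees. Write $x$ for the vertex in question, $P=T$ for its surface, and let $a,b,c$ be the three vertices of $T$ other than $x$.

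First I would collect the two constraints that Lemma~\ref{basic}(i) puts on the finite tree $C(x)$: its leaves are vertices of $P$, hence lie in $\{a,b,c\}$, so $C(x)$ has at most three leaves; and $a,b,c\in C(x)$, so $C(x)$ is not a single point and therefore has at least two leaves. Thus the number $\ell$ of leaves is $2$ or $3$. Next I would invoke the handshake identity for a finite tree, $\sum_v(\deg v-2)=\ell-2$, the sum running over the vertices of degree at least three (which, by Lemma~\ref{basic}(i), need not be vertices of $T$): since $\ell\le 3$ the right-hand side is at most $1$, and each such vertex contributes at least $1$, so there is at most one of them; if one exists it has degree exactly $3$ and $\ell=3$. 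Hence, topologically, $C(x)$ is either an arc (when $\ell=2$) or a tripod with a single triple point (when $\ell=3$).

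It then remains to identify these two cases with (possibly degenerate) $Y$-trees and to count edges. If $\ell=3$, then $a,b,c$ are precisely the leaves, so no vertex of $T$ is interior to $C(x)$; deleting the triple point leaves three arcs, each joining it to one of $a,b,c$, and each is a single edge of $C(x)$: by Lemma~\ref{basic}(iii) the edges are geodesic segments, and such an arc cannot carry an interior simplicial vertex, since that would be a vertex of $T$ by Lemma~\ref{basic}(i), of which none remain. So $C(x)$ is a genuine $Y$-tree. If $\ell=2$, say with leaves $a$ and $b$, then $C(x)$ is an arc from $a$ to $b$ and the remaining vertex $c$ lies in its relative interior; $c$, not being a leaf, is a junction point of $C(x)$, and viewing $c$ as the centre of a $Y$-tree whose third edge has length $0$ exhibits $C(x)$ as a degenerate $Y$-tree. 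The only delicate point is this last bookkeeping in the $\ell=3$ case, which is exactly why the argument leans on parts (i) and (iii) of Lemma~\ref{basic}; the core is the Euler count, and I anticipate no real obstacle.
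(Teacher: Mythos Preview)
Your argument is correct and follows exactly the route the paper intends: the paper simply states that Lemma~\ref{basic} ``immediately implies'' Lemma~\ref{L2}, and your Euler count on the tree $C(x)$ (at most three leaves among $\{a,b,c\}$, hence at most one branch point, of degree exactly $3$) is precisely the unpacking of that implication. One small caveat: your claim in the $\ell=3$ case that each arm is a \emph{single} edge relies on reading Lemma~\ref{basic}(i) as saying that every $0$-cell of the simplicial complex is either a leaf, a branch point, or a vertex of $T$; the lemma as stated does not literally assert this, but the conclusion that $C(x)$ has the topological type of a $Y$ (one triple point, three arms) is already secured by your degree count, and that is all the paper uses downstream.
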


We need one more lemma for our first main result.

\begin{lm}
\label{eq_tri}
Consider the family $\mathcal{I}$ of all planar acute triangles $\Delta$ inscribed in a given circle $\mathcal{C}$, 
and let $l_\Delta$ denote the longest side of $\Delta \in \mathcal{I}$.
Then $\inf_{\Delta \in \mathcal{I}} l_\Delta$ is achieved for equilateral triangles.
\end{lm}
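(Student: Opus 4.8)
The plan is to reduce the statement to elementary trigonometry via the law of sines. Let $R$ be the radius of $\mathcal{C}$. Given a triangle $\Delta \in \mathcal{I}$, label its angles $\alpha \geq \beta \geq \gamma$; then the longest side $l_\Delta$ is the one opposite the largest angle $\alpha$, and the extended law of sines gives $l_\Delta = 2R\sin\alpha$. So the problem becomes: minimize $\sin\alpha$ over the admissible range of $\alpha$.

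Next I would pin down that range. Since $\alpha+\beta+\gamma=\pi$ and $\alpha$ is the largest of the three angles, $\alpha \geq \pi/3$; since $\Delta$ is acute, $\alpha < \pi/2$. The sine function is strictly increasing on $[\pi/3,\pi/2]$, hence $\sin\alpha \geq \sin(\pi/3) = \sqrt{3}/2$, so $l_\Delta \geq R\sqrt{3}$, with equality if and only if $\alpha = \pi/3$. But $\alpha = \pi/3$ forces $\beta = \gamma = \pi/3$ as well, i.e. $\Delta$ is equilateral; conversely, the equilateral triangle inscribed in $\mathcal{C}$ belongs to $\mathcal{I}$ and has all sides equal to $R\sqrt{3}$. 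Therefore $\inf_{\Delta\in\mathcal{I}} l_\Delta = R\sqrt{3}$, and it is attained exactly by the equilateral triangles.

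There is essentially no obstacle here: the only ingredients are the law of sines, the monotone correspondence between a side of a triangle and its opposite angle, and the monotonicity of $\sin$ on $[0,\pi/2]$; in particular no compactness or limiting argument is needed, since the infimum is in fact attained. As an alternative, one could phrase the argument in terms of the three arcs $2a, 2b, 2c$ (with $a+b+c=\pi$) into which the vertices divide $\mathcal{C}$: acuteness means each arc is shorter than a semicircle, i.e. $a,b,c<\pi/2$; the side subtending the arc $2a$ has length $2R\sin a$; and the same monotonicity of $\sin$ shows that the longest side, which corresponds to $\max\{a,b,c\}\geq\pi/3$, is smallest precisely when $a=b=c=\pi/3$.
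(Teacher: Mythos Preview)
Your argument is correct and is essentially the same as the paper's: both rely on the law of sines together with the monotonicity of $\sin$ on $[0,\pi/2]$, so that the longest side corresponds to the largest angle (equivalently, the longest intercepted arc), which is minimized when all three are equal. The paper phrases this via arcs in two sentences, while you spell out the bound $\alpha\ge\pi/3$ explicitly and then note the arc version as an alternative; the content is the same.
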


\begin{proof}
Under the hypotheses, just note that the edge lengths of the triangle are in the same order as the lengths of the intercepted arc of circles,
by the Sine Rule and the monotony of the sine function on $[0,\pi/2]$. 
The longest arc is obviously shortest when all three arcs are equal.
\end{proof}

\begin{thm}
\label{TE1}
For any tetrahedron holds
\[
1\leq\frac{\mathrm{Diam}\left( T\right) }{\mathrm{diam}\left( T\right)
}\leq\frac{2}{\sqrt{3}}
\]
and both inequalities are sharp. 

The first inequality becomes equality, for example, for $\e$-thick tetrahedra with $\varepsilon$ small enough, 
while the second inequality becomes equality for the regular tetrahedon.
\end{thm}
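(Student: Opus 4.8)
The plan is to establish the two bounds separately. The lower bound $\mathrm{Diam}(T)\ge\mathrm{diam}(T)$ is immediate: any geodesic segment joining two points $x,y\in T$ has length at least the Euclidean distance $\|x-y\|$, so taking $x,y$ to realize $\mathrm{diam}(T)$ gives $\mathrm{Diam}(T)\ge\rho(x,y)\ge\|x-y\|=\mathrm{diam}(T)$. Sharpness of this inequality for $\e$-thick tetrahedra should follow by a direct estimate: if one edge shrinks into a ball of radius $\e\,\mathrm{diam}(T)$ around the midpoint of the longest edge $[pq]$, then $T$ degenerates to a doubly covered triangle-like object, and any geodesic between two points is close to a straight segment, so $\mathrm{Diam}(T)/\mathrm{diam}(T)\to 1$ as $\e\to 0$; I would make this quantitative by bounding the length of the source-unfolded image of a diametral geodesic.

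For the upper bound, the strategy is to combine Lemma~\ref{L1}, Lemma~\ref{L2} and the unfolding/inscribed-triangle machinery. By Lemma~\ref{L1}, a point $x$ with $\mathrm{Rad}_x=\mathrm{Diam}(T)$ is either a vertex of $T$ or an antipode of a vertex, so it suffices to bound $\mathrm{Rad}_v$ for a vertex $v$ (the antipodal case being symmetric, since $v$ is then an antipode of $x$ and $\rho(v,x)=\mathrm{Rad}_x=\mathrm{Rad}_v$). Fix a vertex $v$. By Lemma~\ref{L2}, $C(v)$ is a (possibly degenerate) $Y$-tree whose leaves are the other three vertices. Cutting $T$ along $C(v)$ gives the source unfolding of $T$ with respect to $v$, a planar polygon; because $C(v)$ is a $Y$-tree with three leaves, this polygon is a (possibly degenerate) planar configuration in which $v$ unfolds to several copies $v_1,\dots,v_k$ all lying on a single circle centered at the image of the antipode — more precisely, the farthest point from $v$ on $T$ lies on $C(v)$, and the developed images of $v$ around it lie on a circle of radius $\mathrm{Rad}_v$. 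The key geometric point is that $\mathrm{Diam}(T)=\mathrm{Rad}_v$ forces this circle to have radius exactly $\mathrm{Diam}(T)$, while on the other hand all four vertices of $T$ are at mutual Euclidean distance $\le\mathrm{diam}(T)$; unfolding the three faces around $v$ produces an acute triangle (here I would invoke the acuteness obtained along the lines of Lemma~\ref{acute}, or argue directly that the relevant developed triangle must be acute since its circumcenter is the antipode and lies in its interior) inscribed in that circle, whose sides are edges of $T$ and hence have length $\le\mathrm{diam}(T)$. By Lemma~\ref{eq_tri}, among acute triangles inscribed in a fixed circle the longest side is minimized by the equilateral one, for which side $=\sqrt{3}\,R$; therefore $\mathrm{diam}(T)\ge$ (longest side of the inscribed triangle) $\ge\sqrt{3}\,\mathrm{Rad}_v=\sqrt{3}\,\mathrm{Diam}(T)$ is impossible unless $R\le\mathrm{diam}(T)/\sqrt{3}$, giving $\mathrm{Diam}(T)=\mathrm{Rad}_v=R\le\frac{2}{\sqrt{3}}\cdot\frac{\mathrm{diam}(T)}{2}$; I will track the constant carefully to land exactly at $\frac{2}{\sqrt{3}}$.

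The main obstacle I anticipate is the precise geometry of the source unfolding of a vertex: controlling how many copies of $v$ appear, verifying that their developed images genuinely lie on a common circle centered at the antipodal point, and establishing the acuteness of the relevant developed triangle in full generality (including degenerate $Y$-trees and degenerate tetrahedra). Handling the case where $C(v)$ degenerates — so that one branch has zero length and the antipode coincides with a vertex or lies on an edge — will require separate, careful bookkeeping. The equality discussion for the regular tetrahedron is then a direct check via Lemma~\ref{reg_int}: there $\mathrm{Diam}(T)=\frac{2}{\sqrt3}$ and $\mathrm{diam}(T)=1$ by Corollary~\ref{reg_ext} (for unit edge), so the ratio equals $\frac{2}{\sqrt3}$, and tracing back through Lemma~\ref{eq_tri} shows equality forces the developed triangle to be equilateral, which matches the regular tetrahedron.
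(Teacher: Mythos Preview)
Your overall plan---reduce to a vertex $v$ via Lemma~\ref{L1}, unfold, identify the antipode as a circumcentre, then apply Lemma~\ref{eq_tri}---is the right skeleton, and it is essentially the paper's route. But two of your key geometric claims are wrong as stated, and one of them is a genuine gap.

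First, a terminological slip: cutting along $C(v)$ is the \emph{source} unfolding and produces a single image of $v$; it is the \emph{star} unfolding (cutting along the three edges $[va],[vb],[vc]$) that yields three images $v_1,v_2,v_3$ of $v$. In that star unfolding the triple junction $p$ of $C(v)$ is equidistant from $v_1,v_2,v_3$ and hence is the circumcentre of $\Delta=v_1v_2v_3$. So far so good. However, the sides of $\Delta$ are \emph{not} edges of $T$: each side $[v_iv_j]$ straddles two unfolded faces and by the triangle inequality has length at most the sum of two edges of $T$, hence at most $2\,\mathrm{diam}(T)$, not $\mathrm{diam}(T)$. (As it happens, this factor of $2$ is harmless: combined with the equilateral bound ``longest side $\ge \sqrt{3}\,R$'' it still yields $R\le \tfrac{2}{\sqrt3}\,\mathrm{diam}(T)$.)

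The real gap is your acuteness claim. You propose to argue that $\Delta$ is acute because ``its circumcentre is the antipode and lies in its interior'', but $p$ lies in the interior of the star-unfolding \emph{hexagon}, not a priori in the interior of $\Delta$; these two regions do not contain one another in general. The paper explicitly notes that $\Delta$ ``might not be acute'', and without acuteness Lemma~\ref{eq_tri} gives no lower bound on the longest side in terms of the circumradius (an obtuse inscribed triangle can have all sides short). Invoking Lemma~\ref{acute} does not help either: that lemma concerns isosceles tetrahedra, and $T$ need not be isosceles.

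The paper repairs exactly this point by inserting an intermediate isosceles tetrahedron $T'$: one regards $\Delta$ (acute or not) as the star unfolding of an isosceles $T'$ obtained by folding $\Delta$ along its midlines, checks that every edge of $T'$ is at most some edge of $T$ (so $\mathrm{diam}(T')\le\mathrm{diam}(T)$) while $\mathrm{Diam}(T')\ge\rho(v,p)=\mathrm{Diam}(T)$, and only \emph{then} unfolds $T'$ from one of its vertices. By Lemma~\ref{acute} that second unfolding \emph{is} an acute triangle, and Lemma~\ref{eq_tri} applies cleanly to compare with the regular $T''$. Your proposal is missing this reduction to the isosceles case; once you add it, the argument goes through.
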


\begin{proof}
The intrinsic distance between two points is never less than the extrinsic distance, so the first inequality is obvious. 
It is also clear that both diameters are equal for an $\e$-thick tetrahedron, whenever $\varepsilon>0$  is sufficiently small.

By Lemma \ref{L1}, there exists a vertex $v\in T$ and a point $p\in T$ such
that $\rho\left( v,p\right) =\mathrm{Diam}\left( T\right)$. If $p$ is
also a vertex, then the $[pv]$ is greater than or equal to any
other edge, for the lengths of edges are also the intrinsic distance between
their endpoints. Hence, by Lemma \ref{LED}, 
$\mathrm{Diam}\left( T\right)=\mathrm{diam}\left( T\right)$.

Assume now that $p$ is a flat point, that is, the only triple point of
$C\left( v\right)  $, which is a Y-tree be virtue of Lemma \ref{L2}. Cutting
$T$ along the three edges meeting at $v$ and unfolding it onto a plane yields
the development shown in plain lines in Figure 1. 
There, $p$ is on the bisector line of any two of the images of $v$, 
by a direct consequence of Lemma \ref{basic} (iv).
Hence $p$ is the circum-centre of the triangle determined the three images of $v$.
We now conside the triangle $\Delta$ drawn in dots, with vertices at the images of $v$, 
as the unfolding of an isosceles tetrahedron denoted by $T^{\prime}$, see Lemma \ref{gluing}.
Notice that $T'$ has acute triangles as faces, by Lemma \ref{acute}, even though $\Delta$ might not be acute.

We claim that each edge of $T'$ is shorter than one edge of $T$.
This is clear if $\Delta$ is acute, because then the edges of $T'$ have half-length of the sides of $\Delta$, 
which in turn have a length less than twice an edge of T by the triangle inequality.
If $\Delta$ is not acute, then four of the edges of $T'$ have half-length of two sides of $\Delta$.
The last two edges of $T'$ equal the length of the median line of $\Delta$ with respect to its longest edge, 
and so, since $\Delta$ is not acute, it is strictly shorter than half-length of that side.

Whence $\mathrm{diam}\left( T^{\prime}\right) \leq\mathrm{diam}\left( T\right)$. 
On the other hand, the intrinsic distance between $p$ and $x$ is unchanged, whence $\mathrm{Diam}
\left( T^{\prime}\right)  \geq\rho\left( x,v\right) =\mathrm{Diam}\left(T\right) $.

\begin{figure*}
\label{F3}
\centering
 \includegraphics[width=\textwidth]{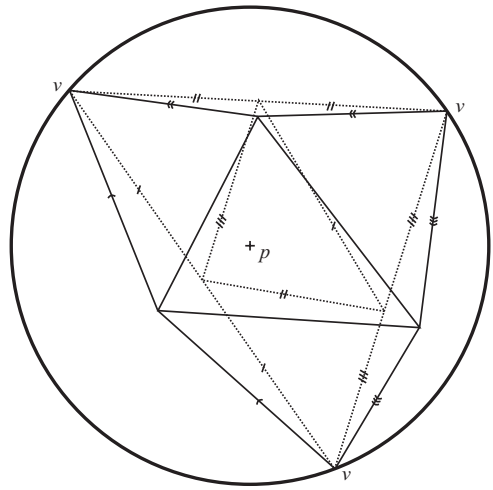}
\caption{Unfoldings of $T$ and $T^{\prime}$.}
\end{figure*}

By Lemma \ref{L1}, there exists a vertex $v^{\prime}\in T^{\prime}$ and a point
$p^{\prime}\in T^{\prime}$ such that $\rho\left(  v^{\prime},p^{\prime
}\right)  =\mathrm{Diam}\left(  T^{\prime}\right)  $. Cutting $T^{\prime}$
along the three edges meeting at $v^{\prime}$ and unfolding it onto a plane
yields an acute triangle (see Lemma \ref{acute}) similar to one the shown in doted lines in Figure 1.
It is now easy to see that the longest edge of $T^{\prime}$ is longer that the
edge of a regular tetrahedron $T^{\prime\prime}$ whose unfolding is
inscribed in same the circle, see Lemma \ref{eq_tri}. Moreover, this deformation doesn't change the distance 
$\mathrm{Diam}\left( T^{\prime}\right) =\rho\left( p^{\prime},v^{\prime}\right) 
\leq\mathrm{diam}\left( T^{\prime\prime}\right)$,
whence
\[
\frac{\mathrm{Diam}\left(  T\right)  }{\mathrm{diam}\left(  T\right)  }%
\leq\frac{\mathrm{Diam}\left(  T^{\prime}\right)  }{\mathrm{diam}\left(
T\right)  }\leq\frac{\mathrm{Diam}\left(  T^{\prime\prime}\right)
}{\mathrm{diam}\left(  T^{\prime\prime}\right)  }=\frac{2}{\sqrt{3}}\text{.}%
\]
\end{proof}

\bigskip

The upper bound given by Theorem \ref{TE1} for tetrahedra, $\frac{2}{\sqrt{3}} \approx 1.15$, is clearly better than 
the upper bound obtained by N. P. Makuha \cite{ma} for general convex surfaces, $\frac{\pi}{2} \approx 1.57$.


\section{Diameters and radii}
\label{radi}

It follows from the triangle inequality, in any compact metric space, that the ratio between diameter and radius belongs to $[1,2]$. 
In the case of the intrinsic metric of a tetrahedron, we have the following result.

\begin{prop}
\label{Diam/Rad}
For any convex polyhedron $P$ holds
\[
1 < \frac{\mathrm{Diam}\left( P \right) }{\mathrm{Rad}\left( P\right) }\leq2 \text{.}
\]

The second inequality is sharp, and achieved by normal $\e$-thick tetrahedra, for small $\e$.
\end{prop}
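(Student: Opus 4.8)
\textbf{Proof proposal for Proposition \ref{Diam/Rad}.}

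The plan is to handle the two inequalities separately, since they are of quite different natures. The upper bound $\mathrm{Diam}(P) \le 2\,\mathrm{Rad}(P)$ is immediate from the triangle inequality in any compact metric space: for any $x$ and any antipode pair realizing $\mathrm{Diam}$, pass through a point realizing $\mathrm{Rad}$. So the real content on that side is the \emph{sharpness} claim, and for that I would exhibit the normal $\e$-thick tetrahedron explicitly. Let $[yz]$ be its longest edge of length $d = \mathrm{diam}(T)$, with midpoint $m$, and let $[ab]$ be the opposite edge, which lies in a ball of radius $\e d$ about $m$, normal to $[yz]$ and to the line through the two midpoints. For $\e$ small the surface is a thin sliver; I would argue that $\mathrm{Diam}(T) = d$ (realized by $[yz]$, since the intrinsic and extrinsic distance between the two far-apart vertices coincide when the path can be taken nearly straight along the sliver) while $\mathrm{Rad}(T) \to d/2$ as $\e \to 0$ — indeed the midpoint $m'$ of the "crease" along the short edge, or the apex of the cut locus, sees both $y$ and $z$ at intrinsic distance roughly $d/2$, and no point sees anything much farther. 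Taking $\e$ small enough forces the ratio above $2 - \delta$ for any $\delta > 0$; I should be slightly careful and either show the ratio is exactly $2$ in a degenerate limit or that it can be made arbitrarily close, matching the phrasing "achieved … for small $\e$."

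For the strict lower bound $1 < \mathrm{Diam}(P)/\mathrm{Rad}(P)$, equivalently $\mathrm{Rad}(P) < \mathrm{Diam}(P)$, I would argue by contradiction: suppose $\mathrm{Rad}(P) = \mathrm{Diam}(P)$. Then there is a point $x \in P$ with $\mathrm{Rad}_x = \mathrm{Rad}(P) = \mathrm{Diam}(P)$, so every point of $P$ is within intrinsic distance $\mathrm{Diam}(P)$ of $x$ — automatic — but more usefully, for \emph{every} $y$ we have $\rho(x,y) \le \mathrm{Rad}(P) = \mathrm{Diam}(P)$, and equality holds for $y$ an antipode of $x$. The key point is that if $\mathrm{Rad}(P) = \mathrm{Diam}(P)$ then \emph{every} point of $P$ realizes the radius, i.e. $\mathrm{Rad}_z = \mathrm{Diam}(P)$ for all $z$: given $z$, pick an antipode $w$ of $z$ with $\rho(z,w) = \mathrm{Rad}_z \ge \mathrm{Rad}(P) = \mathrm{Diam}(P)$, forcing $\mathrm{Rad}_z = \mathrm{Diam}(P)$. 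Now apply Lemma \ref{L1}: such a point $z$ must be a vertex or an antipode of a vertex. But a generic point of $P$ (interior to a face, not a vertex and not in any cut locus of a vertex) is neither — there are only finitely many vertices, hence finitely many sets $F_v$, and the union of the four cut loci plus the four vertices plus $\bigcup_v F_v$ is a closed set of measure zero, so it cannot be all of $P$. This contradiction gives strictness. I should double-check that $\bigcup_v F_v$ is indeed small: $F_v = C(v) \cap \{\text{farthest points}\}$ need not be finite in general, but for tetrahedra it is contained in the cut locus of $v$, which is a $Y$-tree by Lemma \ref{L2}, and the set of antipodes is a proper closed subset of it, hence still measure zero in $P$.

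The main obstacle I anticipate is the sharpness direction: one must pin down precisely what $\mathrm{Rad}$ and $\mathrm{Diam}$ are for the normal $\e$-thick tetrahedron, or at least their limiting behavior, which requires identifying the cut locus of the relevant point and checking that no point of the sliver is intrinsically far from both ends simultaneously. Care is also needed because "normal $\e$-thick tetrahedron" is a one-parameter family (the short edge can be a small segment of various lengths, and the two midpoints can be at various separations), so I would fix the simplest representative — short edge of length tending to $0$, the two midpoints coinciding in the limit — and compute there, then note robustness. The lower-bound argument, by contrast, is essentially a measure/genericity argument combined with Lemma \ref{L1}, and should go through cleanly once the "every point realizes the radius" observation is in place.
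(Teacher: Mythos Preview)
Your argument for the strict inequality has a scope problem. Proposition~\ref{Diam/Rad} is stated for an arbitrary convex polyhedron $P$, but your route goes through Lemma~\ref{L1} and Lemma~\ref{L2}, both of which are formulated (and proved, via the O'Rourke--Schevon five-segment criterion) only for tetrahedra. For a polyhedron with more than four vertices, a diametral pair $(z,w)$ may consist of two non-vertices joined by $\ge 5$ segments, and then $z$ need not lie in any $F_v$ with $v$ a vertex; your measure-zero union argument does not cover those $z$. The paper handles the strict inequality by citing two outside results instead: equality $\mathrm{Rad}(S)=\mathrm{Diam}(S)$ forces the farthest-point map $F$ to be a single-valued involution \cite{vz1}, and no convex polyhedron has this property \cite{JR}. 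If you want a self-contained argument you would need to control the ``five-or-more-segments'' locus for general $P$, which is considerably more work.

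On the sharpness side your outline is in the right spirit but looser than necessary. The paper does not take a limit: for a normal $\e$-thick tetrahedron with longest edge $[ab]$, midpoint $m$, opposite edge $[cd]$ with midpoint $q$, the two reflection symmetries (across the planes $abq$ and $mcd$) force the cut locus $C(m)$ to be exactly $[aq]\cup[bq]\cup[cd]$, so $F_m\subset\{a,b,c,d,q\}$. For $\e$ small one then checks directly that $\rho(m,a)=\rho(m,b)=\tfrac12\|a-b\|$ dominates the other candidates, giving $\mathrm{Rad}_m=\tfrac12\|a-b\|$ \emph{exactly}. Since always $\mathrm{Rad}(T)\ge\tfrac12\mathrm{Diam}(T)\ge\tfrac12\|a-b\|$, this pins down $\mathrm{Rad}(T)=\tfrac12\|a-b\|$ and $\mathrm{Diam}(T)=\|a-b\|$, so the ratio equals $2$ on the nose rather than merely tending to $2$. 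The symmetry computation of $C(m)$ is the step your sketch is missing.
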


\begin{proof}
For each convex surface $S$ with $\mathrm{Rad}\left(S\right)=\mathrm{Diam}\left( S\right)$, the mapping $F$ is a single-valued involution \cite{vz1}, 
and no convex polyhedron has this property \cite{JR}, hence $\mathrm{Rad}\left(S\right)< \mathrm{Diam}\left( S\right)$ in this case.

Consider now a normal $\e$-thick tetrahedron $T$. 
Let $a$, $b$ be the
endpoints of its longest edge and $c$, $d$ the two other vertices. Let $m$ be the
midpoint of $[ab]$, and let $q$ be the midpoint of $\left[ cd\right]$. Since $T$ is symmetric with
respect to the plane $abq$, the Jordan arc of $C\left( m\right)$ between
$a$ and $b$ should be included in this plane, and so is the union of
$\left[aq\right]$ and $\left[ bq\right]$. Similarly, the symmetry with respect
to the plane $mcd$ infers that the Jordan arc of $C\left( m\right) $ between
$c$ and $d$ is $\left[  cd\right]  $. It follows that $q$ is the only point in
$C\left(  m\right)  $ of degree more than two. Hence
$F_{m}\subset\left\{  q,a,b,c,d\right\}  $. It is clear that for $\varepsilon$
small enough $\rho\left(  m,q\right)  $ and $\rho\left( m,c\right)
=\rho\left( m,d\right)$ are both less that $\rho\left(  m,a\right)
=\rho\left( m,b\right) =\frac{\nm{a-b}}{2}$, whence $\mathrm{Rad}_{m}=\frac{\nm{a-b}}2=\mathrm{Rad}\left( T\right)$. 
This completes the proof.
\end{proof}

\bigskip

We have a similar result for the extrinsic metric.

\begin{prop}
For any  convex polyhedron $P$  holds
\[
1 < \frac{\mathrm{diam}\left( P\right) }{\mathrm{rad}\left( P\right) } \leq 2 \text{.}
\]
The second inequality is sharp, and achieved by $\e$-thick tetrahedra, for small $\e$.
\end{prop}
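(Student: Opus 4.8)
The plan is to split the statement as usual. The upper bound $\mathrm{diam}(P)\le 2\,\mathrm{rad}(P)$ is the same triangle‑inequality observation recalled just before Proposition~\ref{Diam/Rad}, now applied to the \emph{extrinsic} metric on $P$: if $c\in P$ realizes $\mathrm{rad}_c=\mathrm{rad}(P)$, then $\|x-y\|\le\|x-c\|+\|c-y\|\le 2\,\mathrm{rad}(P)$ for all $x,y\in P$. So the real content of the first part is the strict inequality $\mathrm{rad}(P)<\mathrm{diam}(P)$, and the plan is to obtain it directly from Proposition~\ref{LED}(1), bypassing any discussion of constant width.

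For $\mathrm{rad}(P)<\mathrm{diam}(P)$: fix a point $x$ in the relative interior of a two‑dimensional face of $P$, which exists since $P$ bounds a $3$‑dimensional convex body. I claim $\mathrm{rad}_x<\mathrm{diam}(P)$. Indeed $\mathrm{rad}_x=\max_{y\in P}\|x-y\|\le\mathrm{diam}(P)$ always, and the maximum is attained by compactness; were it equal to $\mathrm{diam}(P)$, Proposition~\ref{LED}(1) would force $x$ to be a vertex of $P$, which it is not. Hence $\mathrm{rad}(P)\le\mathrm{rad}_x<\mathrm{diam}(P)$.

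For the sharpness of the upper bound, I would argue as in Proposition~\ref{Diam/Rad}, only more simply. Let $T$ be an $\e$‑thick tetrahedron with $\e<\tfrac12$, let $a,b$ be the endpoints of its longest edge and $m$ the midpoint of $[ab]$. Since any two vertices of a tetrahedron span an edge and, by Proposition~\ref{LED}(1), the extrinsic diameter of $T$ is attained between two vertices, $\mathrm{diam}(T)$ equals the length of the longest edge, namely $\|a-b\|$. Let $c,d$ be the remaining two vertices. By definition of an $\e$‑thick tetrahedron some edge of $T$ lies in the ball of radius $\e\,\mathrm{diam}(T)=\e\|a-b\|$ centred at $m$; for $\e<\tfrac12$ this edge cannot be incident to $a$ or $b$, because $\|a-m\|=\|b-m\|=\tfrac12\|a-b\|>\e\|a-b\|$, so it must be $[cd]$, giving $\|m-c\|,\|m-d\|\le\e\|a-b\|<\tfrac12\|a-b\|=\|m-a\|=\|m-b\|$. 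Consequently $\mathrm{rad}_m=\tfrac12\|a-b\|=\tfrac12\mathrm{diam}(T)$, so $\mathrm{rad}(T)\le\tfrac12\mathrm{diam}(T)$; together with $\mathrm{diam}(T)\le 2\,\mathrm{rad}(T)$ from the upper bound this gives $\mathrm{diam}(T)=2\,\mathrm{rad}(T)$.

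I do not expect a genuine obstacle here: once Proposition~\ref{LED} is available the whole argument is elementary. The two points needing a little attention are that the strictness in $\mathrm{rad}(P)<\mathrm{diam}(P)$ really requires the auxiliary point to sit in the relative interior of a face — a vertex of $P$ may perfectly well be at extrinsic distance $\mathrm{diam}(P)$ from another vertex, so one cannot start the argument from an arbitrary point — and that in the $\e$‑thick example one should invoke Proposition~\ref{LED}(1) to know that $\mathrm{diam}(T)$ is attained between vertices, hence equals the longest edge, before locating $c$ and $d$. A heavier alternative for the strict inequality would be to note that $\mathrm{rad}(P)=\mathrm{diam}(P)$ forces every boundary point of $P$ to be an endpoint of a chord of length $\mathrm{diam}(P)$ and then to contradict $P$ being a polyhedron, but Proposition~\ref{LED}(1) makes this detour unnecessary.
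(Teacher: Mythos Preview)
Your argument is correct. The paper's own proof is two lines: for strictness it invokes the (quoted) fact that $\mathrm{rad}(S)=\mathrm{diam}(S)$ characterises constant width, together with the observation that no polyhedron has constant width; for sharpness it just says ``easy to check''. Your route for the strict inequality is genuinely different and more self-contained: instead of passing through constant-width bodies, you pick a non-vertex point $x$ and use Proposition~\ref{LED}(1) to rule out $\mathrm{rad}_x=\mathrm{diam}(P)$ directly. This buys you an argument that stays entirely inside the paper's own toolkit, with no appeal to the external characterisation of constant width, and in fact you identify the paper's approach as the ``heavier alternative'' in your last paragraph. For the equality case you supply the details the paper omits; one small point worth making explicit is that your computation of $\mathrm{rad}_m$ tacitly uses Proposition~\ref{LED}(1) once more, to know that $f_m$ consists of vertices and hence that it suffices to compare $\|m-a\|,\|m-b\|,\|m-c\|,\|m-d\|$.
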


\begin{proof}
The first inequality is strict because no polyhedron has constant width.

The case of equality is easy to check.
\end{proof}

\bigskip

From Theorem 1 and Proposition 3 directly follows

\begin{co}
For any  tetrahedron $T$  holds
\[
\frac{\sqrt{3}}{4} < \frac{\mathrm{rad}}{\mathrm{Diam}} < 1 \text{.}
\]
\end{co}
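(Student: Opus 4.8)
The plan is to combine the two chains of inequalities already available in the paper. From Theorem \ref{TE1} we have $\mathrm{Diam}(T)\le\frac{2}{\sqrt3}\,\mathrm{diam}(T)$, and from Proposition 3 (the extrinsic diameter/radius bound) we have $\mathrm{diam}(T)\le 2\,\mathrm{rad}(T)$. Chaining these gives
\[
\mathrm{Diam}(T)\le\frac{2}{\sqrt3}\,\mathrm{diam}(T)\le\frac{2}{\sqrt3}\cdot 2\,\mathrm{rad}(T)=\frac{4}{\sqrt3}\,\mathrm{rad}(T),
\]
which rearranges to $\frac{\sqrt3}{4}\le\frac{\mathrm{rad}(T)}{\mathrm{Diam}(T)}$. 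For the upper bound, note that the intrinsic metric dominates the extrinsic one pointwise, so for every $x$ one has $\mathrm{rad}_x\le\mathrm{Rad}_x$; taking the minimum over $x$ yields $\mathrm{rad}(T)\le\mathrm{Rad}(T)$, and combining with $\mathrm{Rad}(T)\le\mathrm{Diam}(T)$ (a triangle-inequality fact true in any compact metric space, also recorded in Proposition \ref{Diam/Rad}) gives $\mathrm{rad}(T)\le\mathrm{Diam}(T)$, i.e. $\frac{\mathrm{rad}(T)}{\mathrm{Diam}(T)}\le 1$.

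The next step is to upgrade both inequalities to strict ones. The upper inequality $\frac{\mathrm{rad}}{\mathrm{Diam}}<1$ follows because equality $\mathrm{rad}(T)=\mathrm{Diam}(T)$ would force both $\mathrm{rad}(T)=\mathrm{Rad}(T)$ and $\mathrm{Rad}(T)=\mathrm{Diam}(T)$; the latter is impossible for a polyhedron, as already used in the proof of Proposition \ref{Diam/Rad} (citing \cite{vz1} and \cite{JR}). For the lower inequality, $\frac{\sqrt3}{4}=\frac{\mathrm{rad}}{\mathrm{Diam}}$ would require simultaneous equality in Theorem \ref{TE1}'s upper bound and in $\mathrm{diam}(T)=2\,\mathrm{rad}(T)$; the latter means $T$ has constant width, which no polyhedron does, so the inequality is strict.

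I do not expect a serious obstacle here, since this corollary is a pure consequence of results already proved. The only point requiring a little care is the strictness argument for the lower bound: one must make sure the two equality conditions cannot hold together, which is immediate once one observes that $\mathrm{diam}=2\,\mathrm{rad}$ characterises constant-width bodies (stated in the Introduction) and a tetrahedron is never of constant width. A reader might also wonder whether the bound $\frac{\sqrt3}{4}$ is sharp; it is presumably not, being obtained by concatenating two inequalities whose equality cases are incompatible, but the corollary as stated only claims the strict inequalities, so no sharpness discussion is needed.
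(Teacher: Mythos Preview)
Your overall approach---chaining Theorem~\ref{TE1} with Proposition~3---is exactly the paper's. For the upper bound the paper's route is in fact more direct than yours: Proposition~3 already gives the \emph{strict} inequality $\mathrm{rad}(T)<\mathrm{diam}(T)$, and Theorem~\ref{TE1} gives $\mathrm{diam}(T)\le\mathrm{Diam}(T)$, so $\mathrm{rad}(T)<\mathrm{Diam}(T)$ follows at once without passing through $\mathrm{Rad}$ and Proposition~\ref{Diam/Rad}. Your detour via $\mathrm{Rad}$ is valid, just longer.

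There is, however, a genuine error in your strictness argument for the lower bound. You assert that $\mathrm{diam}(T)=2\,\mathrm{rad}(T)$ means $T$ has constant width; this is false. As stated in the Introduction, the constant-width condition is $\mathrm{diam}=\mathrm{rad}$, the \emph{opposite} extreme of the range $\mathrm{rad}\le\mathrm{diam}\le 2\,\mathrm{rad}$. The equality $\mathrm{diam}=2\,\mathrm{rad}$ is attained, for instance, by $\varepsilon$-thick tetrahedra (Proposition~3), which are certainly not of constant width. The correct way to exclude equality is to observe that $\mathrm{Diam}(T)=\tfrac{2}{\sqrt3}\,\mathrm{diam}(T)$ singles out the regular tetrahedron (Theorem~\ref{TE1}), while for the regular tetrahedron Corollary~\ref{reg_ext} gives $\mathrm{diam}/\mathrm{rad}=\sqrt{3/2}\neq 2$; hence the two equality conditions cannot hold simultaneously.
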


We return now to the statement of Proposition \ref{TE1}.

\begin{conj}
\label{conj}
Prove that
\[
\frac2{\sqrt{3}}\leq\frac{\mathrm{Diam}\left( T\right) }{\mathrm{Rad}\left( T\right) }\text{,}
\]
with equality for the regular tetrahedon.
\end{conj}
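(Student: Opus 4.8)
Equality at the regular tetrahedron is immediate from Lemma~\ref{reg_int} ($\mathrm{Diam}=\tfrac{2}{\sqrt3}$, $\mathrm{Rad}=1$), so the content is the inequality. The natural plan is to imitate the two-step reduction of Theorem~\ref{TE1}. By Lemma~\ref{L1} and the O'Rourke--Schevon criterion (as in the proof of Theorem~\ref{TE1}), write $\mathrm{Diam}(T)=\rho(v,p)$ with $v$ a vertex. If $p$ is also a vertex then $\mathrm{Diam}(T)=\mathrm{diam}(T)$ by Proposition~\ref{LED}, and it suffices to prove $\mathrm{Rad}(T)\le\tfrac{\sqrt3}{2}\,\mathrm{diam}(T)$ under this extra hypothesis. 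I expect this ``flat'' case to be the easier one: the regular tetrahedron does not lie in it, so there is room, and for a doubly covered triangle one always has $\mathrm{Diam}/\mathrm{Rad}\ge\sqrt3$, which suggests the right order of magnitude. A concrete line of attack is to bound $\mathrm{Rad}(T)$ from above by choosing a suitable centre and estimating the intrinsic distances from it through the star unfolding from $v$, using Lemma~\ref{eq_tri} to compare with the equilateral case.

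The substantial case is $p$ a flat point, i.e.\ the unique triple point of the $Y$-tree $C(v)$ (Lemma~\ref{L2}). As in the proof of Theorem~\ref{TE1}, cut $T$ along the three edges at $v$ and unfold; then $p$ becomes the circumcentre of the triangle $\Delta$ through the three images of $v$, of circumradius $R=\mathrm{Diam}(T)$. Let $T'$ be the isosceles tetrahedron associated with $\Delta$ exactly as in that proof, so that $\mathrm{Diam}(T')=R=\mathrm{Diam}(T)$ (at least when $\Delta$ is acute; the obtuse case, where the median construction intervenes, needs separate care). What I would have to show is that this replacement does not increase the ratio, i.e.\ $\mathrm{Rad}(T')\ge\mathrm{Rad}(T)$. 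This is plausible --- the edges of $T'$ are no longer than corresponding edges of $T$, while its intrinsic diameter is unchanged --- but it truly needs an argument, and I expect it to be the first serious obstacle: in contrast to $\mathrm{diam}$, which is pinned down by Proposition~\ref{LED}, there is no known structural description of the point realizing $\mathrm{Rad}$, and no O'Rourke--Schevon-type theorem for it. (Indeed, from the edge midpoint that realizes $\mathrm{Rad}$ of the regular tetrahedron there is a single farthest point, the opposite edge midpoint; so naive comparison-triangle estimates between a centre and two of its antipodes cannot yield the sharp constant, which is further evidence that a new idea is required here.)

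Granting Step~1, the problem reduces to the isosceles case, which is far more tractable: all faces are acute (Lemma~\ref{acute}), every vertex has cone angle $\pi$, the star unfolding from any vertex is an acute triangle, and one has the Klein four-group of symmetries. For an isosceles tetrahedron one should be able to compute $\mathrm{Diam}$ and $\mathrm{Rad}$ in closed form --- as Lemma~\ref{reg_int} does for the regular one --- in terms of the three edge-pair lengths; after normalizing scale, the conjecture then becomes a two-parameter extremal problem whose claimed minimizer is the equilateral development. I would first settle, by a perturbation computation, that the regular tetrahedron is a strict local minimum of $\mathrm{Diam}/\mathrm{Rad}$, and then try to globalize, either by a direct inequality or by a further monotone deformation of $\Delta$ towards the equilateral triangle in the spirit of Lemma~\ref{eq_tri}. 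The hardest part, and presumably the reason the statement is still open, will be to pin down the intrinsic centre of a general isosceles tetrahedron together with its antipodes, which is what a usable formula for $\mathrm{Rad}$ requires.
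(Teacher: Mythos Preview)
The paper does not prove this statement: it is presented as Open Problem~\ref{conj}, and the authors only observe that, combined with Theorem~\ref{TE1}, it would imply Theorem~\ref{Tder}, which they instead establish by a direct argument. So there is no proof in the paper to compare your proposal against.

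As a research plan your outline is reasonable and you have put your finger on the right obstruction. The step you flag --- $\mathrm{Rad}(T')\ge\mathrm{Rad}(T)$ under the passage to the isosceles tetrahedron $T'$ --- is not just an unfinished lemma but may well be false as stated. The construction in Theorem~\ref{TE1} yields $\mathrm{Diam}(T')\ge\mathrm{Diam}(T)$ (the paper proves only this inequality, not the equality you assert) together with $\mathrm{diam}(T')\le\mathrm{diam}(T)$, because every edge of $T'$ is no longer than some edge of $T$. There is no evident mechanism by which shortening edges while keeping or increasing the intrinsic diameter should \emph{raise} the intrinsic radius; the naive expectation runs the other way. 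Absent any structural information about the intrinsic centre of a general tetrahedron --- and, as you note, there is no analogue of the O'Rourke--Schevon criterion for $\mathrm{Rad}$ --- this monotonicity is itself an open sub-question, which is presumably why the authors did not attempt to push the method of Theorem~\ref{TE1} through and left the problem open.
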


Notice that, if solved, the above problem and Theorem \ref{TE1}
would imply $\mathrm{Rad}\left( T\right) \leq\mathrm{diam}\left( T\right)$, 
with equality for the regular tetrahedron. However this latter inequality
can be proven directly.

\begin{thm}
\label{Tder}
For any tetrahedron $T\in \mathcal{T}$ we have%
\[
\mathrm{Rad}\left( T\right) \leq\mathrm{diam}\left( T\right) \text{,}%
\]
with equality if and only if $T$ is regular.
\end{thm}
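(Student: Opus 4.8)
The plan is to reduce the problem to one about a single triangle, via the same unfolding strategy used in the proof of Theorem~\ref{TE1}, and then to use Corollary~\ref{reg_ext} and Lemma~\ref{reg_int} as the reference values for the regular tetrahedron. First I would invoke the analog of Lemma~\ref{L1}: by the O'Rourke--Schevon criterion there is a vertex $v$ of $T$ and a point $p$ with $\rho(v,p)=\mathrm{Rad}_v\ge\mathrm{Rad}(T)$; actually, since $\mathrm{Rad}(T)=\min_x\mathrm{Rad}_x$, I would rather fix a point $x_0$ realizing $\mathrm{Rad}(T)$, i.e.\ $\mathrm{Rad}_{x_0}=\mathrm{Rad}(T)$, and let $p\in F_{x_0}$ be an antipode. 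The point is that $\mathrm{Rad}(T)=\rho(x_0,p)$ for \emph{some} pair; then it suffices to bound this one intrinsic distance by $\mathrm{diam}(T)$ while tracking the equality case.

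Next I would cut and unfold. Let $v$ be a vertex with $\mathrm{Rad}_v\ge\mathrm{Rad}(T)$ (such a $v$ exists since $\mathrm{Rad}(T)\le\mathrm{Rad}_x$ for all $x$, in particular for vertices, so $\mathrm{Rad}(T)\le\min_{v\text{ vertex}}\mathrm{Rad}_v$... — more carefully: take $x_0$ with $\mathrm{Rad}_{x_0}=\mathrm{Rad}(T)$; its farthest point $p$ satisfies $\rho(x_0,p)=\mathrm{Rad}(T)$, and I bound $\rho(x_0,p)$). The cleanest route: cut $T$ along $C(x_0)$ and use the source unfolding, or cut along the (at most four) shortest paths from $x_0$ to the vertices and use the star unfolding, mapping $x_0$ to a single point $\tilde x_0$ in the plane; then $\rho(x_0,p)=\|\tilde x_0-\tilde p\|$ for the corresponding planar image $\tilde p$, and $\tilde p$ lies in the unfolded polygon whose vertices are among the images of the vertices of $T$. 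Since the planar distance from $\tilde x_0$ to any point of a polygon is at most the distance to one of its vertices, we get $\mathrm{Rad}(T)=\|\tilde x_0-\tilde p\|\le\|\tilde x_0-\tilde w\|$ for some vertex image $\tilde w$, and the corresponding edge/segment of $T$ from $x_0$ to the vertex $w$ has length $\ge\|\tilde x_0-\tilde w\|$... no — unfolding is an isometry along the cut paths and an expansion elsewhere, so I need to be careful about the direction of the inequality. The right statement is that the straight segment $[\tilde x_0\tilde p]$ in the plane corresponds to the geodesic on $T$, hence $\rho(x_0,p)=\|\tilde x_0-\tilde p\|$, and I then bound $\|\tilde x_0-\tilde p\|$ by the diameter of the unfolded figure, which I must in turn relate to $\mathrm{diam}(T)$.

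Because controlling the unfolded polygon for an arbitrary tetrahedron is delicate, I expect the main obstacle to be exactly this: showing that the relevant planar distance does not exceed $\mathrm{diam}(T)$, and pinning down when equality forces regularity. I would handle it by the isosceles-tetrahedron reduction of Theorem~\ref{TE1}: replace $T$ by the isosceles tetrahedron $T'$ whose star unfolding from $v$ is the triangle $\Delta$ on the three images of $v$; by the argument already given, each edge of $T'$ is at most an edge of $T$, so $\mathrm{diam}(T')\le\mathrm{diam}(T)$, while $\mathrm{Rad}(T')\ge\mathrm{Rad}(T)$ since the relevant intrinsic distance is unchanged. For an isosceles tetrahedron the total angle at every vertex is $\pi$ (Lemma~\ref{acute}), the faces are acute, and the geometry is rigid enough to compute: unfolding from a vertex gives an acute triangle circumscribed about a circle of radius $\mathrm{Rad}$ (the antipode of a vertex is the circumcenter, as in the proof of Theorem~\ref{TE1}), and one checks that among acute triangles the ratio (circumradius)/(longest side) is maximized — forcing, via Lemma~\ref{eq_tri}, the equilateral case, i.e.\ the regular tetrahedron, with the numerical value supplied by Corollary~\ref{reg_ext} and Lemma~\ref{reg_int} ($\mathrm{Rad}=1$, $\mathrm{diam}=\sqrt{3}$ for the regular tetrahedron of the appropriate edge). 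Tracing equality back through $\mathrm{Rad}(T)\le\mathrm{Rad}(T')$, $\mathrm{diam}(T')\le\mathrm{diam}(T)$, and the triangle-inequality step forces $\Delta$ equilateral and $T=T'$, hence $T$ regular; this last bookkeeping of the equality case is the second, subtler, difficulty.
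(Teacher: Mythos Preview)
There is a genuine gap: the isosceles reduction from Theorem~\ref{TE1} controls $\mathrm{Diam}$, not $\mathrm{Rad}$, and your claimed inequality $\mathrm{Rad}(T')\ge\mathrm{Rad}(T)$ is unjustified. In the passage $T\to T'$ what is preserved is the single distance $\rho(v,p)$ from the chosen vertex $v$ to the circumcentre $p$ of the triangle $\Delta$; since $\mathrm{Diam}(T)=\rho(v,p)$ by hypothesis and $\mathrm{Diam}(T')\ge\rho(v,p)$ trivially, one gets $\mathrm{Diam}(T')\ge\mathrm{Diam}(T)$. But $\mathrm{Rad}$ is a \emph{minimum} over all base points of a maximum, and nothing in the construction compares $\min_{x\in T'}\mathrm{Rad}_x$ with $\min_{x\in T}\mathrm{Rad}_x$. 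Relatedly, your assertion that ``unfolding from a vertex gives an acute triangle circumscribed about a circle of radius $\mathrm{Rad}$'' is false: the circumradius of $\Delta$ equals $\rho(v,p)=\mathrm{Rad}_v$, which for the regular tetrahedron of unit edge is $2/\sqrt3=\mathrm{Diam}$, not $\mathrm{Rad}=1$. If you push your computation through you recover the bound $2/\sqrt3$ of Theorem~\ref{TE1}, not the bound $1$ you are after.

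The paper's argument is quite different and exploits the min--max structure of $\mathrm{Rad}$ directly: since $\mathrm{Rad}(T)\le\mathrm{Rad}_x$ for \emph{every} $x$, it suffices to exhibit one well-chosen point $o$ with $\mathrm{Rad}_o\le\mathrm{diam}(T)$. The paper takes $o$ to be the midpoint of a longest edge $[ab]$, unfolds $T$ from the opposite vertex $d$, and shows (after possibly re-hinging one or two faces) that the whole development is covered by four discs whose diameters are edges of $T$, all contained in the disc of centre $o$ and radius $\mathrm{diam}(T)$; a short case analysis using Apollonius' theorem then gives $\rho(o,o')\le\mathrm{diam}(T)$ for any $o'\in F_o$. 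The equality analysis uses that $o'\in F_o$ is joined to $o$ by at least three segments, forcing three of the small discs to be internally tangent to the big one, hence five edges of length $\mathrm{diam}(T)$; repeating with another longest edge gives regularity. The key idea you are missing is this choice of $o$ and the disc-covering of the unfolding, not a reduction to the isosceles case.
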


\begin{proof}
Let $T$ be a tetrahedron of unit extrinsic diameter. Denote by $a$ and $b$ the
endpoints of its (or one of its) longest edge(s), and by $c$ and $d$ the two
other vertices. Let $o$ be the midpoint of $\left[  ab\right] $. Cutting
along the three edges meeting at $d$ and unfolding $T$ onto a plane yields the
development $G_0$ shown in plain lines in Figure 2, where $d_a, d_b$ and $d_c$ are the images of $d$. 

\begin{figure*}
\label{Fder}
\centering
 \includegraphics[width=\textwidth]{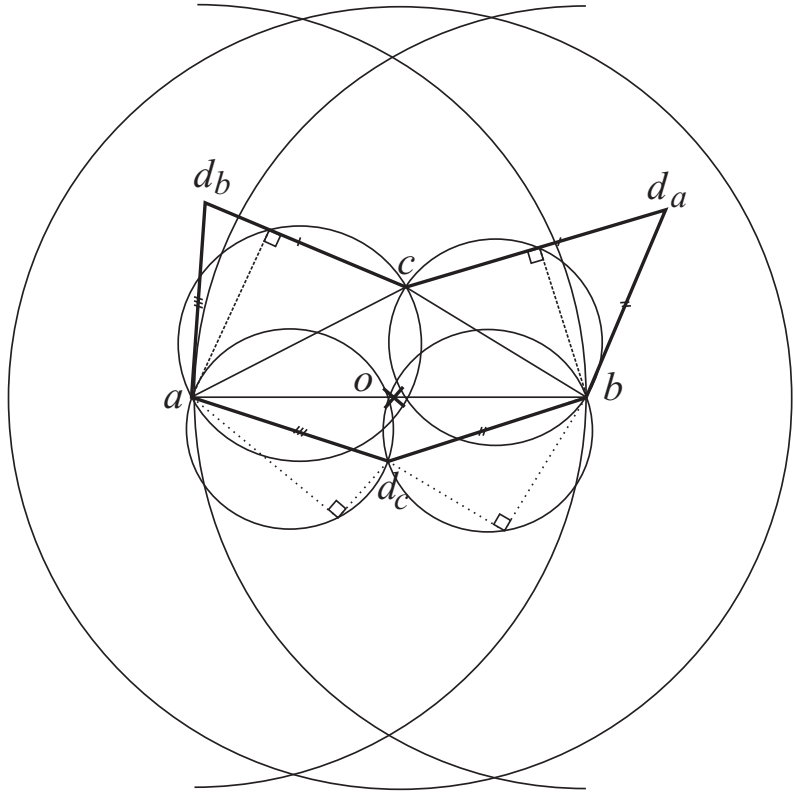}
\caption{The development of $T$ in the proof of Theorem \ref{Tder}.}
\end{figure*}

If the triangle $bcd_a$ is right or obtuse at $d_a$ then it is included in the disc of diameter $\left[  bc\right]$, and we set $G=G_0$.

If the triangle $bcd_a$ is right or obtuse at $c$ then cut along $[bc]$ and rotate it around $b$, 
to obtain another development $G$ in which the image of the face $bcd$ is included in the union of the discs of diameters 
$\left[  bc\right]  $ and $\left[bd_c\right]$.

Otherwise, if  the triangle $bcd_a$  is acute at $d_a$ then cut it along the dash line and rotate the outer part
around $b$, to obtain another development $G$ in which the image of the face $bcd$ is included
in the union of the discs of diameters $\left[  bc\right]  $ and $\left[bd_c\right] $. 
In the same way, we can arrange the face $acd$ in the union
of the discs of diameters $\left[  ac\right] $ and $\left[  a d_c\right] $. 

We claim that each of those four discs are included in the disc $D$ of center
$o$ and radius $1$. Let $u$ be the midpoint of $\left[  ac\right]  $. 
Since $[ac]$ is less than or equal to the longest edge of $T$, $c$
belongs to the disc of center $a$ and radius $1$, and thus $u$ belongs to the
disc of diameter $\left[  ab\right]$:
$||a-u||=||a-c||/2\leq\frac{1}{2}$, $||o-u||\leq\frac{1}{2}$. 
Hence the disc of diameter $\left[  ac\right]$ is included in $D$. The proof is similar for the three other discs.
We shall use this fact to conclude the equality case.

Notice that the quadrilateral $acbd_c$ is convex, because $[ab]$ is the longest edge in $T$.

Let $o'$ belong to $F_{o}$; for simplicity, also denote by $o'$ the image
of $o'$ on $G$ (or one of its images, if there are more).

Assume first that $o'$ belongs to the convex quadrilateral  $acbd_c$.
Then $||o-o'|| \leq \max \{ ||o-a||, ||o-c||, ||o-b||, ||o-d_c|| \}$,
and Apollonius's theorem gives $ \max \{ ||o-c||, ||o-d_c|| \} \leq \sqrt3/2 <1$.

Assume now that $o'$ belongs to one of the four discs 
of diameters the sides of the quadrilateral $acbd_c$, say the one centered at $u$.
Then 
$$\rho(o,o') \leq ||o-u|| + ||u-o'|| \leq |b-c||/2 + ||a-c||/2 \leq 1.$$

It follows that $\mathrm{Rad}\left(  T\right)  \leq \mathrm{Rad}_{o}=\rho \left(  o,o' \right)  \leq1$.

Assume now that we have equality, hence 
$\mathrm{Rad}\left(  T\right)  = \mathrm{Rad}_{o}=\rho \left(  o,o^{\prime}\right) = 1$. 
The development has to intersect $\partial D$. 
Notice that there are at least three geodesic segments joining $o$ to $o' \in F_o$ on $T$.
So it follows, moreover, that three of the four small circles have to be tangent to $\partial D$.
Assume one of them is that of diameter $\left[  ac\right]$. 
This implies that $c$ is one point of intersection of the circles of radius one
centered at $a$ and $b$, and so $||a-c||=||b-c||=||a-b||=1$. 
Similarly  $||a-d_c||=||b-c||=||b-d_c||=1$. So all edges, except possibly $\left[  cd\right]$, have length one. 

Now one can repeat the whole argument above replacing $\left[  ab\right]  $ by another longest edge to
prove that $T$ is actually regular. 
\end{proof}

\bigskip

The first inequality below is valid for arbitrary convex surfaces; compare it to the first one in Theorem 1.

\begin{prop}
For any tetrahedron $T\in \mathcal{T}$  holds
\[
1\leq\frac{\mathrm{Rad}\left( T\right) }{\mathrm{rad}\left( T \right)} < 2.
\]

The first inequality is sharp, and achieved by normal $\e$-thick tetrahedra, for small $\e$.
\end{prop}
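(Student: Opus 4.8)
The plan is to prove the two inequalities separately. For the upper bound $\mathrm{Rad}(T)/\mathrm{rad}(T) < 2$, I would argue as follows. In any compact metric space one has $\mathrm{Diam} \leq 2\,\mathrm{Rad}$ trivially, but here we need a bound in terms of the \emph{extrinsic} radius, so one direction of comparison (intrinsic $\geq$ extrinsic) cuts the wrong way. Instead, pick a point $x$ realizing $\mathrm{rad}(T) = \mathrm{rad}_x$, so that $T$ lies in the extrinsic ball of radius $\mathrm{rad}(T)$ about $x$; then for every $y\in T$ we have $\|x-y\|\leq \mathrm{rad}(T)$, hence the extrinsic diameter of $T$ is at most $2\,\mathrm{rad}(T)$, and by Theorem \ref{TE1} (or simply Makuha's inequality), $\mathrm{Rad}(T)\leq\mathrm{Diam}(T)\leq \tfrac{2}{\sqrt3}\mathrm{diam}(T)\leq \tfrac{2}{\sqrt3}\cdot 2\,\mathrm{rad}(T)$, which is larger than $2$; that is too weak. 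A better route: $\mathrm{Rad}(T)\leq \mathrm{Rad}_x$ for the center $x$ of an extrinsic smallest enclosing ball, and $\mathrm{Rad}_x = \max_y \rho(x,y)$. The key is to bound an intrinsic distance $\rho(x,y)$ by something like (extrinsic distance) times a factor, or to use that the extrinsic ball of radius $\mathrm{rad}(T)$ contains $T$ together with Makuha; the cleanest is probably: $\mathrm{Rad}(T) < \mathrm{Diam}(T) \leq 2\,\mathrm{Rad}(T)$ is not what we want, so instead observe $\mathrm{Rad}(T) \leq \mathrm{Diam}(T)$ and then bound $\mathrm{Diam}(T)$ by $2\,\mathrm{rad}(T)$ only if $\mathrm{Diam}=\mathrm{diam}$, which fails in general. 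The honest approach: strict inequality $\mathrm{Rad}(T)<2\,\mathrm{rad}(T)$ should follow because if it were an equality-type bound, one would need the intrinsic geometry to be ``doubled'' relative to a point while the extrinsic ball is tight, and a compactness/continuity argument rules out the limiting case; I expect the paper uses $\mathrm{Rad}\leq\mathrm{Diam}<2\,\mathrm{Rad}$ combined with a separate soft bound.

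For the lower bound $1 \leq \mathrm{Rad}(T)/\mathrm{rad}(T)$, i.e. $\mathrm{rad}(T)\leq\mathrm{Rad}(T)$, I would prove the stronger pointwise fact that for every $x\in T$, $\mathrm{rad}_x \leq \mathrm{Rad}_x$, which is immediate since $\|x-y\|\leq\rho(x,y)$ for all $y$; taking the minimum over $x$ gives $\mathrm{rad}(T) = \min_x \mathrm{rad}_x \leq \min_x \mathrm{Rad}_x = \mathrm{Rad}(T)$. This is the same observation used for the first inequality of Theorem \ref{TE1}, and it is valid for arbitrary convex surfaces, matching the paper's remark preceding the proposition.

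For sharpness of the first inequality, I would verify that a normal $\e$-thick tetrahedron $T$ with $\e$ small attains $\mathrm{Rad}(T)/\mathrm{rad}(T)\to 1$. Let $a,b$ be the endpoints of the longest edge, $m$ its midpoint, and $c,d$ the other two vertices, all within distance $\e\,\mathrm{diam}(T)$ of $m$. From the proof of Proposition \ref{Diam/Rad} we already know $\mathrm{Rad}(T) = \mathrm{Rad}_m = \tfrac{\|a-b\|}{2}$ for $\e$ small. On the extrinsic side, the point $m$ has $\mathrm{rad}_m = \max_y\|m-y\| = \tfrac{\|a-b\|}{2}$ as well, since $a,b$ are the farthest points from $m$ (the normality/thickness hypotheses guarantee $c,d$ are much closer to $m$); hence $\mathrm{rad}(T)\leq \mathrm{rad}_m = \tfrac{\|a-b\|}{2} = \mathrm{Rad}(T)$, and combined with the lower bound just proved this forces $\mathrm{rad}(T) = \mathrm{Rad}(T)$, so the ratio equals $1$.

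The main obstacle is the \emph{upper} bound and its strictness. Getting a clean proof of $\mathrm{Rad}(T) < 2\,\mathrm{rad}(T)$ requires relating the intrinsic radius to the \emph{extrinsic} radius, and the naive chain through $\mathrm{Diam}$ and Makuha's (or our Theorem \ref{TE1}'s) factor does not close below $2$ because $\mathrm{diam}$ can exceed $\mathrm{rad}$. I expect the argument to go: $\mathrm{Rad}(T)\leq\mathrm{Diam}(T)<2\,\mathrm{Rad}(T)$ shows the ratio $\mathrm{Diam}/\mathrm{Rad}<2$; then one needs $\mathrm{Rad}\leq\mathrm{diam}$ — which is exactly Theorem \ref{Tder} — to deduce $\mathrm{Rad}(T)\leq\mathrm{diam}(T)\leq 2\,\mathrm{rad}(T)$, giving $\leq 2$, and strictness then comes from the fact that $\mathrm{diam}(T)=2\,\mathrm{rad}(T)$ only for constant-width sets, which no polyhedron is, so $\mathrm{diam}(T)<2\,\mathrm{rad}(T)$ and hence $\mathrm{Rad}(T)<2\,\mathrm{rad}(T)$. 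So the proof reduces to citing Theorem \ref{Tder} plus the no-constant-width remark already made in the paper.
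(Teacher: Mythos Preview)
Your eventual route for both bounds is exactly the paper's: the lower bound via $\|x-y\|\le\rho(x,y)\Rightarrow\mathrm{rad}_x\le\mathrm{Rad}_x\Rightarrow\mathrm{rad}(T)\le\mathrm{Rad}(T)$, and the upper bound via the chain $\mathrm{Rad}(T)\le\mathrm{diam}(T)\le 2\,\mathrm{rad}(T)$, citing Theorem~\ref{Tder} and Proposition~3. The sharpness argument for normal $\varepsilon$-thick tetrahedra is also the intended one.

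There is, however, a genuine error in your strictness argument. You write that ``$\mathrm{diam}(T)=2\,\mathrm{rad}(T)$ only for constant-width sets''. This is the wrong extremal: constant width is characterized by $\mathrm{diam}=\mathrm{rad}$, not by $\mathrm{diam}=2\,\mathrm{rad}$. In fact the very normal $\varepsilon$-thick tetrahedra you invoke for sharpness of the lower bound \emph{do} satisfy $\mathrm{diam}(T)=2\,\mathrm{rad}(T)$ (the midpoint $m$ of the longest edge has $\mathrm{rad}_m=\|a-b\|/2=\mathrm{diam}(T)/2$). So the second inequality in the chain can be an equality for some tetrahedra, and your argument does not close. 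The correct way to obtain strictness is to note that equality $\mathrm{Rad}(T)=\mathrm{diam}(T)$ forces $T$ to be regular by Theorem~\ref{Tder}, while for the regular tetrahedron $\mathrm{diam}/\mathrm{rad}=\sqrt{3/2}<2$ by Corollary~\ref{reg_ext}; hence the two equalities in the chain cannot hold simultaneously, and $\mathrm{Rad}(T)<2\,\mathrm{rad}(T)$.

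A smaller slip in the sharpness paragraph: from $\mathrm{rad}(T)\le\mathrm{rad}_m=\|a-b\|/2=\mathrm{Rad}(T)$ together with ``the lower bound just proved'' (which is again $\mathrm{rad}\le\mathrm{Rad}$) you cannot conclude equality, since both inequalities point the same way. What closes the argument is the reverse bound $\mathrm{rad}(T)\ge\mathrm{diam}(T)/2=\|a-b\|/2$, valid for any convex surface.
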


\begin{proof}
We have, for all $x,y \in S$, $|| x-y ||\le\rho(x,y)$, so $\rad_x\le\Rad_x$, hence $\rad\left( S\right) \le\Rad \left( S\right)$.

The case of equality is easy to check, see the proof of Proposition \ref{Diam/Rad}.

Theorem \ref{Tder} and  Proposition 3 yield the last inequality.
\end{proof}


\medskip

Jin-ichi Itoh

\noindent{\footnotesize School of Education, Sugiyama Jogakuen University
\newline 17-3 Hoshigaoka-motomachi, Chikusa-ku, Nagoya, 464-8662 Japan}

{\small \hfill j-itoh@sugiyama-u.ac.jp}

\medskip

Jo\"{e}l Rouyer

\noindent{\footnotesize 16, rue Philippe, 68 200 H\'egenheim - France

{\small \hfill Joel.Rouyer@ymail.com}

\medskip

Costin V\^\i lcu

\noindent{\footnotesize {\sl Simion Stoilow} Institute of Mathematics of the Roumanian Academy
\newline P.O. Box 1-764, 014700 Bucharest, Roumania}

{\small \hfill Costin.Vilcu@imar.ro}

\end{document}